\begin{document}

\title{A unified convergence analysis for the fractional diffusion equation driven by fractional Gaussion noise with Hurst index $H\in(0,1)$
	\thanks{This work was supported by the National Natural Science Foundation of China under
		Grant No. 12071195, and the AI and Big Data Funds under Grant No. 2019620005000775.
	}
}

\titlerunning{A Unified Convergence for Stochastic Fractional Diffusion Equation}        

\author{Daxin Nie         \and
        Weihua Deng 
}


\institute{Daxin Nie \at
              School of Mathematics and Statistics, Gansu Key Laboratory of Applied Mathematics and Complex Systems, Lanzhou University, Lanzhou 730000, P.R. China \\
              \email{ndx1993@163.com}           
           \and
           Weihua Deng \at
              School of Mathematics and Statistics, Gansu Key Laboratory of Applied Mathematics and Complex Systems, Lanzhou University, Lanzhou 730000, P.R. China\\
              \email{dengwh@lzu.edu.cn}
}

\date{Received: date / Accepted: date}

\maketitle

\begin{abstract}
Here, we provide a unified framework for numerical analysis of stochastic nonlinear fractional diffusion equation driven by fractional Gaussian noise with Hurst index $H\in(0,1)$.
A novel estimate of the second moment of the stochastic integral with respect to fractional Brownian motion is constructed, which greatly contributes to the regularity analyses of the solution in time and space for $H\in(0,1)$.
Then we use spectral Galerkin method and backward Euler convolution quadrature to discretize the fractional Laplacian and Riemann-Liouville fractional derivative, respectively. The sharp error estimates of the built numerical scheme are also obtained. Finally, the extensive numerical experiments verify the theoretical results.
\keywords{Stochastic nonlinear fractional diffusion equation\and Fractional Gaussian noise\and The unified regularity analysis\and Sharp error estimate}
\end{abstract}

\section{Introduction}

In this paper, we numerically solve the stochastic nonlinear fractional diffusion equation driven by fractional Gaussian noise with Hurst index $H\in(0,1)$, i.e.,
\begin{equation}\label{equretosol}
	\left \{
	\begin{split}
		&\partial_{t} u+\!_0\partial^{1-\alpha}_tA^{s} u
		=f(u)+\dot{W}^{H}_{Q} \,~\qquad\quad {\rm in}\ D,\ t\in(0,T],\\
		&u(\cdot,0)=0 \,\,\qquad\qquad\qquad\qquad \qquad\qquad{\rm in}\ D,\\
		&u=0 \qquad\qquad\qquad\qquad\qquad\qquad\quad \,\,\,\ \ \, {\rm on}\ \partial D,\ t\in(0,T],
	\end{split}
	\right .
\end{equation}
where  $D\subset \mathbb{R}^{d}$ $(d=1,2,3)$ is a convex polygonal domain; $A^{s}$ with $s\in(0,1)$ is fractional Laplacian defined by
\begin{equation*}
	A^{s}u=\sum_{k=1}^{\infty}\lambda_{k}^{s}(u,\phi_{k})\phi_{k},
\end{equation*}
and we let $A=-\Delta$ with a zero Dirichlet boundary condition, which has non-decreasing eigenvalues $\{\lambda_{k}\}_{k=1}^{\infty}$ and $L^{2}$-norm normalized eigenfunctions $\{\phi_{k}\}_{k=1}^{\infty}$; $f(u)$ is a nonlinear term and we assume that for $u,v\in \mathbb{H}$, there are
\begin{equation}\label{eqassptf}
	\begin{aligned}
		\|f(u)\|_{\mathbb{H}}\leq& C(1+\|u\|_{\mathbb{H}}),\\
		\|f(u)-f(v)\|_{\mathbb{H}}\leq& C\|u-v\|_{\mathbb{H}}
	\end{aligned}
\end{equation}
with $C$ being a positive constant; and ${}_0\partial^{1-\alpha}_t$ with $\alpha\in(0,1)$ is the Riemann-Liouville fractional derivative, whose definition is \cite{Podlubny.1999Fde}
\begin{equation}
	_{0}\partial^{1-\alpha}_tu=\frac{1}{\Gamma(\alpha)}\frac{\partial}{\partial t}\int^t_{0}(t-\xi)^{\alpha-1}u(\xi)d\xi.
\end{equation}
Here $W^{H}_{Q}$ with $H\in(0,1)$ is the fractional Gaussian process on a complete filtered probability space $(\Omega,\mathcal{F},\mathbb{P},\{\mathcal{F}_{t}\}_{t\geq 0})$ defined by
\begin{equation*}
	W^{H}_{Q}=\sum_{k=1}^{\infty}\sqrt{\Lambda_{k}}\phi_{k}W^{H}_{k},
\end{equation*}
where $W^{H}_{k}$ is one dimensional fBm and $Q$ is a self-adjoint, non-negative, linear operator and its eigenfunctions are same with $A$, and $\{\Lambda_{k}\}_{k=1}^{\infty}$ are the corresponding eigenvalues of $Q$. Then $A^{-\rho}Q^{1/2}$ with $\rho$ being a real number is a  Hilbert-Schmidt operator on $\mathbb{H}=L^{2}(D)$.

Now, we present a brief introduction to (\ref{equretosol}). Let $D$ be a bounded domain, $B(t)$ be a Brownian motion with $B(0)\in D$, and $\tau_D=\inf\{t>0: B(t)\notin D\}$. Denote $T_t$ as a $s$-stable subordinator. Let
\begin{equation*}
 X(t)=\left\{
\begin{array}{cc}
B(T_t),\quad &T_t<\tau_D,\\
\Theta,\quad &T_t\ge\tau_D
\end{array}
 \right.
 \end{equation*}
with $\Theta$ being a coffin state, implying to subordinate a killed Brownian motion (when first leaving the domain $D$).
The infinitesimal generator of $ X(t)$ is $A^{s}$ \cite{Song.2003Vondra}.  Further do the time change to $X(t)$ by the inverse $\alpha$-stable subordinator, the Fokker-Planck equation of which is $\partial_{t} u+\!_0\partial^{1-\alpha}_tA^{s} u =0 $, describing the competition between superdiffusion and subdiffusion. If the population of the particles is also being affected by the external source term depending on the density of the particles and the external fractional Gaussian noise, the Fokker-Planck equation Eq. (\ref{equretosol}) is reached. Most of the time, the influence of external noise  \cite{Mandelbrot.1968FBMFNaA,Simonsen.2003Macitnesmbw} on a system is unavoidable. Fractional Gaussian noise is one of the most popular external noises, the Hurst index $H$ of which describes the long-range dependence of the fractional Gaussian process \cite{Banna.2019Fbm}. To be specific, the fractional Gaussian process with $H\in(0,1/2)$ and $H\in(1/2,1)$ can be used to model the phenomena with a short memory and a long memory, respectively. From the viewpoint of mathematics, the Hurst index reflects the H\"older property of fractional Brownian motion's trajectory.


In the past few decades, there have been some numerical discussions for the stochastic PDEs driven by fractional Gaussian noise with the index $H\in(1/2,1)$ or $H=1/2$ \cite{Arezoomandan.2021ScmfspdewfBm,Li.2017GFEAfSSTFWE,Liu.2021HOAfSSFWEFbaASTGN,Wang.2017SmsrrfSwfnaocrftna,Wu.2020AaotLsfsspdbistwn,Yan.2019OeeffspdewfBm}. In addition,  \cite{Cao.2017ASEEwAWaRN,Cao.2018FeafsosdedbfBm} use the finite element method to solve the PDE driven by spatial fractional Gaussian noise with an index $H\in(0,1/2)$, where some special Green functions and It\^{o} isometry are used to provide the regularity of the solution, but these techniques can not reflect the influence of the temporal fractional Gaussian noise with $H\in(0,1/2)$ on the regularity of the mild solution and there are hardly researches for the temporal fractional Gaussian noise with $H\in(0,1/2)$. To try to fill the gap, a unified argument for $H\in(0,1)$ is proposed in this paper.  A key step of the analysis is to
give a novel estimate for $H\in(0,1)$  by the It\^{o} isometry and the equivalence of different fractional Sobolev spaces, i.e.,
\begin{equation}\label{eqintro}
	\mathbb{E} \left (\int_{0}^{T} g(r) d W^{H}(r)\right )^{2}\leq C\|{}_{0}\partial_{t}^{\frac{1-2H}{2}}g\|_{L^{2}([0,T])}^{2},
\end{equation}
which makes $\mathbb{E} \left (\int_{0}^{T} g(r) d W^{H}(r)\right )^{2}$ be bounded by a convolution of $g$ instead of the multiple integral of $g$. Thanks to \eqref{eqintro}, we obtain the sharp regularity estimates of the mild solution in time and space for \eqref{equretosol} with $H\in (0,1)$ by operator theory approach. Then the full discretization is built by the spectral Galerkin method in space and backward Euler convolution quadrature in time, respectively; the optimal error estimates are provided by transforming the solutions of discrete schemes into a convolution form and using regularity estimates of the mild solution and \eqref{eqintro}. It must be emphasized that, because of the nonlinear term $f(u)$ and the operator ${}_{0}\partial_{t}^{\frac{1-2H}{2}}$, the derivation of temporal error estimate is not an easy task and some new skills based on Laplace transform need to be introduced; for the details,  see Section 4. At the same time, the corresponding error estimates can show the influence of $H$, $s$, and $\alpha$ on convergence rates. Finally, the numerical examples validate the effectiveness of the numerical scheme. To the best of our knowledge, this is the first work on strong convergence analysis for the stochastic PDE driven by fractional Gaussian noise with $H\in(0,1)$.

The rest of the paper is organized as follows. In Section 2, we first provide some preliminaries and useful lemmas, and then study the spatial regularity and temporal regularity of the mild solution of Eq. \eqref{equretosol}. We apply spectral Galerkin method to discretize fractional Laplacian and  derive optimal error estimate for semidiscrete scheme in Section 3. In Section 4, the backward Euler convolution quadrature method  is used to discretize the time fractional derivative and the temporal error estimate of the numerical scheme is also obtained. We  provide some numerical examples to validate the  theoretically predicted convergence order in Section 5. The paper is concluded with some discussions in the last section. Throughout the paper, we denote by $C$ a generic positive constant, whose value may differ at different occurrences.

\section{Regularity of the solution}
\subsection{Preliminaries}

We first recall some definitions and properties on fractional Sobolev spaces \cite{Acosta.2019Feaffep,Acosta.2017AFLERoSaFEA,DiNezza.2012HgttfSs}. Let $D\subset \mathbb{R}^{d}$. The fractional Sobolev space $H^{s}(D)$ with $s\in(0,1)$ can be defined by
\begin{equation*}
	H^{s}(D)=\left \{w\in H^{s}(D):|w|_{H^{s}(D)}=\int_{D}\int_{D}\frac{(w(x)-w(y))^{2}}{|x-y|^{d+2s}}dxdy<\infty\right \},
\end{equation*}
whose norm can be written as $\|\cdot\|_{H^{s}(D)}=\|\cdot\|_{L^{2}(D)}+|\cdot|_{H^{s}(D)}$. The functions in $H^{s}(\mathbb{R}^{d})$ with support in $\bar{D}$ consist in
\begin{equation*}
	H^{s}_{0}(D)=\left \{w\in H^{s}(\mathbb{R}^{d}), {\rm\bf supp}~w\subset \bar{D}\right \},
\end{equation*}
with norm
\begin{equation*}
	\|w\|_{H^{s}_{0}(D)}=\|w\|_{L^{2}(D)}+c_{d,s}\int_{\mathbb{R}^{d}}\int_{\mathbb{R}^{d}}\frac{(w(x)-w(y))^{2}}{|x-y|^{d+2s}}dxdy,
\end{equation*}
which can also be expressed as
\begin{equation*}
	\|w\|_{H^{s}_{0}(D)}=\|(1+|\omega|^{2})^{s/2}\mathcal{F}(w)(\omega)\|_{L^{2}(\mathbb{R}^{d})},
\end{equation*}
with $\mathcal{F}(w)$ being the Fourier transform of $w$ and $c_{d,s}=\frac{2^{2s}s\Gamma(d/2+s)}{\pi^{d/2}\Gamma(1-s)}$.
\begin{remark}\label{reeqsob}
	According to \cite{Acosta.2019Feaffep},  $H^{s}(D)$ coincides with $H^{s}_{0}(D)$ when $s\in(0,\frac{1}{2})$.
\end{remark}

In what follows, we provide some preliminary facts on Hilbert-Schmidt operator, which can refer to \cite{Kloeden.1992Nsosde,Mishura.2008ScffBmarp}. Let $\mathcal{L}(\mathbb{U};\mathbb{V})$ be the Banach space consisting of all bounded linear operators $\mathbb{U}\rightarrow \mathbb{V}$, where $\mathbb{U}$ and $\mathbb{V}$ are two separable Hilbert spaces and their norms and inner products are denoted by $\|\cdot\|_{\mathbb{U}}$, $\|\cdot\|_{\mathbb{V}}$, and $(\cdot,\cdot)_{\mathbb{U}}$ and $(\cdot,\cdot)_{\mathbb{V}}$, respectively. Let $\mathcal{L}_{2}(\mathbb{U};\mathbb{V}) \,(\subset \mathcal{L}(\mathbb{U};\mathbb{V}))$ consist of all Hilbert-Schmidt operators, whose norm and inner product are defined by
\begin{equation*}
	\|T\|^{2}_{\mathcal{L}_{2}(\mathbb{U},\mathbb{V})}=\sum_{j\in \mathbb{N}^{*}}\|T\mu_{j}\|^{2}_{\mathbb{V}},~~~ \langle S,T \rangle_{\mathcal{L}_{2}(\mathbb{U},\mathbb{V})}=\sum_{j\in\mathbb{N}^{*}}( S\mu_{j},T\mu_{j})_{\mathbb{V}},~~ S,T\in \mathcal{L}_{2}(\mathbb{U},\mathbb{V}),
\end{equation*}
where $\{\mu_{j}\}_{j\in \mathbb{N}^{*}}$ are the orthonormal bases in $\mathbb{U}$ and the above definitions are independent of the specific choice of orthonormal bases.
Let $\mathbb{H}=L^{2}(D)$ with inner product $(\cdot,\cdot)$ and covariance operator $Q$ be a  self-adjoint, nonnegative linear operator on $\mathbb{H}$. Below we  use the notation `$\tilde{~}$' for taking Laplace transform; $\mathbb{E}$ denotes expectation; let $\epsilon > 0$ be arbitrarily small number and abbreviate $\|\cdot\|_{\mathcal{L}(\mathbb{H},\mathbb{H})}$ as $\|\cdot\|$.

Finally, we provide the definitions of the sectors and contour that will be used in the following proofs.
For $\kappa>0$ and $\pi/2<\theta<\pi$, we define sectors 
\begin{equation*}
	\begin{aligned}
		&\Sigma_{\theta}=\{z\in\mathbb{C}:z\neq 0,|\arg z|\leq \theta\},\ \Sigma_{\theta,\kappa}=\{z\in\mathbb{C}:|z|>\kappa,|\arg z|\leq \theta\},\\
	\end{aligned}
\end{equation*}
and 
the contour $\Gamma_{\theta,\kappa}$ by
\begin{equation*}
	\Gamma_{\theta,\kappa}=\{r e^{-\mathbf{i}\theta}: r\geq \kappa\}\cup\{\kappa e^{\mathbf{i}\psi}: |\psi|\leq \theta\}\cup\{r e^{\mathbf{i}\theta}: r\geq \kappa\},
\end{equation*}
where the circular arc is oriented counterclockwise and the two rays are oriented with an increasing imaginary part and $\mathbf{i}^2=-1$.

\subsection{Some useful lemmas}
Here, we provide a few technical lemmas, which can help to get the regularity of the solution of Eq. \eqref{equretosol}.
Let's first recall the definitions of left- and right-sided Riemann-Liouville fractional integrals and derivatives.
\begin{definition}[\cite{Podlubny.1999Fde}]
	The left- and right-sided Riemann-Liouville fractional integrals of order $\alpha$ $(\alpha>0)$ are defined by
	\begin{equation*}
		\begin{aligned}
			{}_{a}\partial^{-\alpha}_{x}u=\frac{1}{\Gamma(\alpha)}\int_{a}^{x}(x-\xi)^{\alpha-1}u(\xi)d\xi,\\
			{}_{x}\partial^{-\alpha}_{b}u=\frac{1}{\Gamma(\alpha)}\int_{x}^{b}(\xi-x)^{\alpha-1}u(\xi)d\xi
		\end{aligned}
	\end{equation*}
	with $a,b\in\mathbb{R}$. The Fourier transforms of ${}_{-\infty}\partial^{-\alpha}_{x}u$ and ${}_{x}\partial^{-\alpha}_{\infty}u$ are
	\begin{equation*}
		\begin{aligned}
			\mathcal{F}({}_{-\infty}\partial^{-\alpha}_{x}u)(\omega)=(\mathbf{i}\omega)^{-\alpha}\mathcal{F}(u)(\omega),\\
			\mathcal{F}({}_{x}\partial^{-\alpha}_{\infty}u)(\omega)=(-\mathbf{i}\omega)^{-\alpha}\mathcal{F}(u)(\omega).\\
		\end{aligned}
	\end{equation*}
\end{definition}

\begin{definition}[\cite{Podlubny.1999Fde}]
	The left- and right-sided Riemann-Liouville fractional derivatives of order $\alpha$ $(\alpha\in(0,1))$ are defined by
	\begin{equation*}
		\begin{aligned}
			{}_{a}\partial^{\alpha}_{x}u=\partial_{x}~_{a}\partial^{\alpha-1}_{x}u=\frac{1}{\Gamma(1-\alpha)}\frac{\partial}{\partial x}\int_{a}^{x}(x-\xi)^{-\alpha}u(\xi)d\xi,\\
			{}_{x}\partial^{\alpha}_{b}u=\partial_{x}~_{x}\partial^{\alpha-1}_{b}u=\frac{1}{\Gamma(1-\alpha)}\frac{\partial}{\partial x}\int_{x}^{b}(\xi-x)^{-\alpha}u(\xi)d\xi
		\end{aligned}
	\end{equation*}
	with $a,b\in\mathbb{R}$. The Fourier transforms of ${}_{-\infty}\partial^{\alpha}_{x}u$ and ${}_{x}\partial^{\alpha}_{\infty}u$ can be written as
	\begin{equation*}
		\begin{aligned}
			\mathcal{F}({}_{-\infty}\partial^{\alpha}_{x}u)(\omega)=(\mathbf{i}\omega)^{\alpha}\mathcal{F}(u)(\omega),\\
			\mathcal{F}({}_{x}\partial^{\alpha}_{\infty}u)(\omega)=(-\mathbf{i}\omega)^{\alpha}\mathcal{F}(u)(\omega).\\
		\end{aligned}
	\end{equation*}
\end{definition}
According to \cite{Ervin.2006Vfftsfade}, for $u\in H^{s}(a,b)$ with $a,b\in \mathbb{R}$ and ${\bf supp}~u\subset(a,b)$, there exist two positive constants $C_{1}$ and $C_{2}$ such that
\begin{equation}\label{eqequiHandJ}
	\begin{aligned}
		&C_{1}\|{}_{a}\partial^{s}_{x}u\|_{L^{2}(a,b)}\leq |u|_{H^{s}(a,b)}\leq C_{2}\|{}_{a}\partial^{s}_{x}u\|_{L^{2}(a,b)},\\
		&C_{1}\|{}_{x}\partial^{s}_{b}u\|_{L^{2}(a,b)}\leq |u|_{H^{s}(a,b)}\leq C_{2}\|{}_{x}\partial^{s}_{b}u\|_{L^{2}(a,b)}.
	\end{aligned}
\end{equation}

Then we provide a lemma on left- and right-sided Riemann-Liouville fractional integrals.
\begin{lemma}\label{lemIIiso}
	For $u\in L^{2}(\mathbb{R})$, $\nu\in(0,\frac{1}{2})$ and ${\bf supp}~ u\subset (a,b)$ with $a,b\in \mathbb{R}$, we have
	\begin{equation*}
		\int_{a}^{b}{}_{a}\partial^{-\nu}_{x}u~_{x}\partial^{-\nu}_{b}udx=\frac{1}{2\Gamma(2\nu)}\int_{a}^{b}\int_{a}^{b} u(\xi)u(\eta)|\eta-\xi|^{2\nu-1}d\eta  d\xi
	\end{equation*}
	and
	\begin{equation*}
		\begin{aligned}
			\int_{a}^{b}\int_{a}^{b} u(\xi)u(\eta)|\eta-\xi|^{2\nu-1}d\eta  d\xi\leq C\|{}_{a}\partial^{-\nu}_{x}u\|_{L^{2}(a,b)}^{2},\\
			\int_{a}^{b}\int_{a}^{b} u(\xi)u(\eta)|\eta-\xi|^{2\nu-1}d\eta  d\xi\leq C\|{}_{x}\partial^{-\nu}_{b}u\|_{L^{2}(a,b)}^{2}.
		\end{aligned}
	\end{equation*}
	
\end{lemma}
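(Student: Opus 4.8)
The plan is to treat the equality and the two inequalities separately, deriving the equality by Fubini together with a Beta integral, and then reducing both inequalities to a single equivalence of left- and right-sided fractional-integral norms.

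First I would prove the identity. Substituting the definitions ${}_{a}\partial^{-\nu}_{x}u=\frac{1}{\Gamma(\nu)}\int_{a}^{x}(x-\xi)^{\nu-1}u(\xi)d\xi$ and ${}_{x}\partial^{-\nu}_{b}u=\frac{1}{\Gamma(\nu)}\int_{x}^{b}(\eta-x)^{\nu-1}u(\eta)d\eta$ into the left-hand side yields $\frac{1}{\Gamma(\nu)^{2}}$ times a triple integral over the simplex $\{a<\xi<x<\eta<b\}$ of $(x-\xi)^{\nu-1}(\eta-x)^{\nu-1}u(\xi)u(\eta)$. Since $\nu\in(0,\frac12)$ keeps both kernels locally integrable and $u\in L^{2}$, Fubini applies, and I would reorder so that the innermost integration is in $x\in(\xi,\eta)$. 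The substitution $x=\xi+(\eta-\xi)t$ turns that inner integral into $(\eta-\xi)^{2\nu-1}B(\nu,\nu)=(\eta-\xi)^{2\nu-1}\Gamma(\nu)^{2}/\Gamma(2\nu)$, leaving $\frac{1}{\Gamma(2\nu)}\int_{a}^{b}\int_{\xi}^{b}u(\xi)u(\eta)(\eta-\xi)^{2\nu-1}d\eta\,d\xi$. Because the kernel $|\eta-\xi|^{2\nu-1}$ is symmetric, the region $\eta>\xi$ contributes exactly one half of the full square, which supplies the factor $\tfrac12$ and gives the claimed equality.

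Next I would observe that the two inequalities are equivalent, so that only one needs proof. Writing the reflection $(Ru)(x)=u(a+b-x)$, a change of variables shows the double integral is invariant under $u\mapsto Ru$, while $\|{}_{a}\partial^{-\nu}_{x}(Ru)\|_{L^{2}(a,b)}=\|{}_{x}\partial^{-\nu}_{b}u\|_{L^{2}(a,b)}$. Hence it suffices to establish the first inequality, and applying it to $Ru$ returns the second. For the first, the identity just proved gives $\int_{a}^{b}\int_{a}^{b}u(\xi)u(\eta)|\eta-\xi|^{2\nu-1}d\eta\,d\xi=2\Gamma(2\nu)\,({}_{a}\partial^{-\nu}_{x}u,\,{}_{x}\partial^{-\nu}_{b}u)_{L^{2}(a,b)}$, so by Cauchy--Schwarz the double integral is bounded by $2\Gamma(2\nu)\|{}_{a}\partial^{-\nu}_{x}u\|_{L^{2}(a,b)}\|{}_{x}\partial^{-\nu}_{b}u\|_{L^{2}(a,b)}$. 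It therefore remains to prove the norm equivalence $\|{}_{x}\partial^{-\nu}_{b}u\|_{L^{2}(a,b)}\le C\|{}_{a}\partial^{-\nu}_{x}u\|_{L^{2}(a,b)}$.

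This last equivalence is where I expect the main difficulty to lie. Since ${\bf supp}\,u\subset(a,b)$, on $(a,b)$ the two operators coincide with the whole-line integrals ${}_{-\infty}\partial^{-\nu}_{x}u$ and ${}_{x}\partial^{-\nu}_{\infty}u$, whose Fourier symbols $(\mathbf{i}\omega)^{-\nu}$ and $(-\mathbf{i}\omega)^{-\nu}$ share the modulus $|\omega|^{-\nu}$; Plancherel then makes the two whole-line $L^{2}(\mathbb{R})$ norms equal. The hard part is that these whole-line norms carry the overhang of the left integral on $(b,\infty)$ and of the right integral on $(-\infty,a)$, so passing to the restricted norms on $(a,b)$ demands controlling those tails by the interior contribution. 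The plan is to bound $\int_{b}^{\infty}|{}_{-\infty}\partial^{-\nu}_{x}u|^{2}$ by $C\int_{a}^{b}|{}_{a}\partial^{-\nu}_{x}u|^{2}$, exploiting that the vanishing of $u$ outside $(a,b)$ forces the overhang to be dominated by the interior values of $h:={}_{a}\partial^{-\nu}_{x}u$ (for instance $|\int_{a}^{b}u|\le C\|h\|_{L^{2}(a,b)}$ via Cauchy--Schwarz against $(b-\cdot)^{-\nu}\in L^{2}(a,b)$, together with its refinements, giving a Hardy-type estimate). This tail control, equivalently the equivalence of the left and right Riemann--Liouville integral norms on $(a,b)$ in the spirit of \eqref{eqequiHandJ} and \cite{Ervin.2006Vfftsfade}, is the crux; once it is secured, combining it with the Cauchy--Schwarz bound and the reflection argument completes both inequalities.
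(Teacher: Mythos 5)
Your treatment of the identity coincides with the paper's: expand both fractional integrals, apply Fubini over the simplex $a<\xi<x<\eta<b$, evaluate the inner $x$-integral as a Beta function to produce $(\eta-\xi)^{2\nu-1}\Gamma(\nu)^{2}/\Gamma(2\nu)$, and symmetrize to obtain the factor $\tfrac12$. That part is complete and correct, as is your reflection argument showing the two inequalities are equivalent.

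For the inequalities, however, there is a genuine gap. After Cauchy--Schwarz you reduce everything to the one-sided comparison $\|{}_{x}\partial^{-\nu}_{b}u\|_{L^{2}(a,b)}\leq C\|{}_{a}\partial^{-\nu}_{x}u\|_{L^{2}(a,b)}$, and you yourself flag the required tail bound $\int_{b}^{\infty}|{}_{-\infty}\partial^{-\nu}_{x}u|^{2}dx\leq C\int_{a}^{b}|{}_{a}\partial^{-\nu}_{x}u|^{2}dx$ as ``the crux \dots once it is secured.'' It is never secured: the Hardy-type sketch (controlling $|\int_{a}^{b}u|$ by $\|{}_{a}\partial^{-\nu}_{x}u\|_{L^{2}(a,b)}$ ``together with its refinements'') is not an argument, and \eqref{eqequiHandJ} as stated concerns fractional \emph{derivatives} of positive order, so it cannot simply be quoted for the order $-\nu$ integrals you need. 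The paper closes exactly this gap by a different and cheaper route: since ${\bf supp}\,u\subset(a,b)$, Parseval gives $\int_{a}^{b}{}_{a}\partial^{-\nu}_{x}u\;{}_{x}\partial^{-\nu}_{b}u\,dx=\cos(\nu\pi)\,\||\omega|^{-\nu}\mathcal{F}(u)\|^{2}_{L^{2}(\mathbb{R})}$ with $\cos(\nu\pi)>0$ for $\nu\in(0,\tfrac12)$, while Young's inequality bounds the same quantity by $\frac{C}{\varepsilon_{1}}\|{}_{a}\partial^{-\nu}_{x}u\|^{2}_{L^{2}(a,b)}+\varepsilon_{1}\|{}_{x}\partial^{-\nu}_{\infty}u\|^{2}_{L^{2}(\mathbb{R})}=\frac{C}{\varepsilon_{1}}\|{}_{a}\partial^{-\nu}_{x}u\|^{2}_{L^{2}(a,b)}+\varepsilon_{1}\||\omega|^{-\nu}\mathcal{F}(u)\|^{2}_{L^{2}(\mathbb{R})}$; choosing $\varepsilon_{1}<\cos(\nu\pi)$ and absorbing yields $\||\omega|^{-\nu}\mathcal{F}(u)\|^{2}_{L^{2}(\mathbb{R})}\leq C\|{}_{a}\partial^{-\nu}_{x}u\|^{2}_{L^{2}(a,b)}$, and the already-proved identity then gives both inequalities. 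Note that this absorption step actually delivers your missing comparison as a by-product, since $\|{}_{x}\partial^{-\nu}_{b}u\|_{L^{2}(a,b)}\leq\||\omega|^{-\nu}\mathcal{F}(u)\|_{L^{2}(\mathbb{R})}$; so to repair your proposal, replace the Hardy-type plan with this Fourier/absorption argument, after which the reflection reduction becomes unnecessary.
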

\begin{proof}
	For ${\bf supp}~u\subset (a,b)$, there holds
	\begin{equation*}
		\begin{aligned}
			&\int_{a}^{b}{}_{a}\partial^{-\nu}_{x}u~_{x}\partial^{-\nu}_{b}udx\\
			=&\frac{1}{(\Gamma(\nu))^{2}}\int_{a}^{b}\int_{a}^{x}(x-\xi)^{\nu-1}u(\xi)d\xi \int_{x}^{b}(\eta-x)^{\nu-1}u(\eta)d\eta dx\\
			=&\frac{1}{(\Gamma(\nu))^{2}}\int_{a}^{b}\int_{a}^{x}\int_{x}^{b}(x-\xi)^{\nu-1} (\eta-x)^{\nu-1}u(\xi)u(\eta)d\eta d\xi  dx\\
			=&\frac{1}{(\Gamma(\nu))^{2}}\int_{a}^{b}\int_{\xi}^{b}\int_{x}^{b}(x-\xi)^{\nu-1} (\eta-x)^{\nu-1}u(\xi)u(\eta)d\eta dx d\xi  \\
			=&\frac{1}{(\Gamma(\nu))^{2}}\int_{a}^{b}\int_{\xi}^{b}\int_{\xi}^{\eta}(x-\xi)^{\nu-1} (\eta-x)^{\nu-1}dx u(\xi)u(\eta)d\eta  d\xi  \\
			=&\frac{1}{\Gamma(2\nu)}\int_{a}^{b}\int_{\xi}^{b} u(\xi)u(\eta)(\eta-\xi)^{2\nu-1}d\eta  d\xi  \\
			=&\frac{1}{2\Gamma(2\nu)}\int_{a}^{b}\int_{a}^{b} u(\xi)u(\eta)|\eta-\xi|^{2\nu-1}d\eta  d\xi.
		\end{aligned}
	\end{equation*}
	For ${\rm\bf supp}~u\subset (a,b)$ and $\nu\in(0,\frac{1}{2})$, Parseval's equality leads to
	\begin{equation}\label{eqIIiso}
		\int_{a}^{b}{}_{a}\partial^{-\nu}_{x}u~{}_{x}\partial^{-\nu}_{b}udx=\int_{\mathbb{R}}{}_{-\infty}\partial^{-\nu}_{x}u~{}_{x}\partial^{-\nu}_{\infty}udx=\cos(\nu\pi)\||\omega|^{-\nu}\mathcal{F}(u)\|^{2}_{L^{2}(\mathbb{R})}.
	\end{equation}
	Moreover, for ${\bf supp}~u\subset (a,b)$ and $\nu\in(0,\frac{1}{2})$, by the Cauchy-Schwarz inequality, one has
	\begin{equation*}
		\begin{aligned}
			\int_{a}^{b}{}_{a}\partial^{-\nu}_{x}u~{}_{x}\partial^{-\nu}_{b}udx\leq& \frac{C}{\varepsilon_{1}}\|{}_{a}\partial^{-\nu}_{x}u\|_{L^{2}(a,b)}^{2}+\varepsilon_{1}\|{}_{x}\partial^{-\nu}_{b}u\|_{L^{2}(a,b)}^{2}\\
			\leq& \frac{C}{\varepsilon_{1}}\|{}_{a}\partial^{-\nu}_{x}u\|_{L^{2}(a,b)}^{2}+\varepsilon_{1}\|{}_{x}\partial^{-\nu}_{\infty}u\|_{L^{2}(\mathbb{R})}^{2}\\
			\leq& \frac{C}{\varepsilon_{1}}\|{}_{a}\partial^{-\nu}_{x}u\|_{L^{2}(a,b)}^{2}+\varepsilon_{1}\||\omega|^{-\nu}\mathcal{F}(u)\|_{L^{2}(\mathbb{R})}^{2}.
		\end{aligned}
	\end{equation*}
	Combining \eqref{eqIIiso} and taking a suitable $\varepsilon_{1}$, we can get the first desired result and the second one follows by similar arguments.
\end{proof}

For one-dimensional fBm, the following facts hold.
\begin{lemma}[\cite{Bardina.2006MfiwHplt12,Cao.2017ASEEwAWaRN,Cao.2018FeafsosdedbfBm}]\label{Lemitoeql05}
	For $H\in(0,1/2)$ and $g_{1}(t),g_{2}(t)\in H^{\frac{1-2H}{2}}_{0}([0,T])$, we have
	\begin{equation}\label{equisol05}
		\begin{aligned}
			&\mathbb{E} \left [\int_{0}^{T} g_{1}(r) d W^{H}(r) \int_{0}^{T} g_{2}(r) d W^{H}(r)\right ]\\
			&\qquad\qquad\qquad=\frac{1}{2}H(1-2H) \int_{\mathbb{R}} \int_{\mathbb{R}} \frac{(g_{1}(r_{1})-g_{1}(r_{2}))(g_{2}(r_{1})-g_{2}(r_{2}))}{|r_{1}-r_{2}|^{2 -2H}} d r_{1} d r_{2},
		\end{aligned}
	\end{equation}
	where $W^H$ means one-dimensional fBm.
\end{lemma}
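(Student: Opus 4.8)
The plan is to turn the bilinear assertion \eqref{equisol05} into a single variance identity by polarization, and then prove that identity on the Fourier side. Note first that the right-hand side of \eqref{equisol05} is, up to the factor $\tfrac12 H(1-2H)$, exactly the bilinear form attached to the Gagliardo seminorm of $H^{(1-2H)/2}(\mathbb{R})$: with $d=1$ the denominator exponent $2-2H$ equals $d+2s$ for $s=(1-2H)/2\in(0,1/2)$, which by Remark \ref{reeqsob} agrees with $H^{(1-2H)/2}_0$, so the integral is finite precisely for $g_1,g_2\in H^{(1-2H)/2}_0([0,T])$. Since $g\mapsto\int_0^T g\,dW^H$ is linear, both sides of \eqref{equisol05} are symmetric bilinear forms in $(g_1,g_2)$, so by polarization it suffices to treat the diagonal case
\[
\mathbb{E}\Big(\int_0^T g\,dW^H\Big)^2=\tfrac12 H(1-2H)\int_{\mathbb{R}}\int_{\mathbb{R}}\frac{(g(r_1)-g(r_2))^2}{|r_1-r_2|^{2-2H}}\,dr_1\,dr_2 .
\]

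First I would compute the right-hand side. Substituting $r_2=r_1-\tau$, the inner integral in $r_1$ is $\|g-g(\cdot-\tau)\|_{L^2(\mathbb{R})}^2$, so Parseval turns the double integral into $\int_{\mathbb{R}}|\mathcal{F}(g)(\omega)|^2\big(\int_{\mathbb{R}}|1-e^{-\mathbf{i}\omega\tau}|^2|\tau|^{2H-2}\,d\tau\big)\,d\omega$; the scaling $\tau\mapsto\tau/|\omega|$ extracts a factor $|\omega|^{1-2H}$ and leaves a $g$-independent constant. Equivalently, using the Fourier symbols of the Riemann--Liouville operators together with \eqref{eqequiHandJ}, this equals a constant times $\|{}_{-\infty}\partial_x^{(1-2H)/2}g\|_{L^2(\mathbb{R})}^2$. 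Thus the right-hand side is proportional to $\||\omega|^{(1-2H)/2}\mathcal{F}(g)\|_{L^2(\mathbb{R})}^2$. For the left-hand side I would invoke the harmonizable (spectral) representation of fBm, whose spectral density is proportional to $|\omega|^{1-2H}$; this gives $\mathbb{E}(\int_0^T g\,dW^H)^2=C_1(H)\||\omega|^{(1-2H)/2}\mathcal{F}(g)\|_{L^2(\mathbb{R})}^2$ with the same weight. Both sides therefore carry the identical Fourier weight $|\omega|^{1-2H}$, and the lemma is reduced to fixing one proportionality constant.

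To pin that constant down I would avoid evaluating the singular integral $\int_{\mathbb{R}}(1-\cos\xi)|\xi|^{2H-2}\,d\xi$ directly and instead test the reduced diagonal identity on the single step function $g=\mathbf{1}_{[0,t]}$. Its left-hand side is $\mathbb{E}\,(W^H(t))^2=t^{2H}$; on the right, the numerator is $1$ exactly when one of $r_1,r_2$ lies in $[0,t]$ and the other does not, and the resulting integral splits into two elementary one-sided integrals giving $\int_0^t\frac{1}{1-2H}\big(r_1^{2H-1}+(t-r_1)^{2H-1}\big)\,dr_1$, so the double integral equals $\tfrac{2t^{2H}}{H(1-2H)}$. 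Matching the two then forces precisely the constant $\tfrac12 H(1-2H)$. Finally, density of step functions in $H^{(1-2H)/2}_0([0,T])$ and continuity of both bilinear forms in this norm extend the diagonal identity to all admissible $g$, and polarization returns the full statement \eqref{equisol05}.

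The main obstacle is the rigorous treatment of the Wiener integral for $H<1/2$: the natural integrand space is a genuinely fractional (rough-weight) Sobolev space rather than a space of ordinary functions, so justifying the spectral representation of the integral, the interchange of the Fourier transform with the singular double integral, and the density/continuity step that bridges step functions and $H^{(1-2H)/2}_0([0,T])$ all require care. By contrast, once the diagonal reduction is in place, the constant-matching computation on $\mathbf{1}_{[0,t]}$ is entirely elementary.
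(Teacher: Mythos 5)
The paper does not prove this lemma at all: it is imported verbatim from the cited references (Bardina--Jolis and the two Cao et al.\ papers) and used as a black box, so there is no internal argument to compare yours against. Your reconstruction is essentially the standard proof from that literature and its overall logic is sound: polarization correctly reduces \eqref{equisol05} to the diagonal identity, the Fourier computation of the Gagliardo form (difference quotient in $\tau$, Parseval, scaling out $|\omega|^{1-2H}$) is right, and your constant-matching on $g=\mathbf{1}_{[0,t]}$ is correct --- the two one-sided integrals give $\tfrac{2}{1-2H}\int_0^t\bigl(r_1^{2H-1}+(t-r_1)^{2H-1}\bigr)dr_1=\tfrac{2t^{2H}}{H(1-2H)}$, which matches $\mathbb{E}(W^H(t))^2=t^{2H}$ after multiplying by $\tfrac12 H(1-2H)$. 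One caution: the probabilistic content of the lemma is concentrated in the step where you invoke the harmonizable representation to claim $\mathbb{E}\bigl(\int_0^T g\,dW^H\bigr)^2=C_1(H)\,\bigl\||\omega|^{(1-2H)/2}\mathcal{F}(g)\bigr\|_{L^2(\mathbb{R})}^2$; for general $g$ in the $H<1/2$ integrand class this spectral isometry is essentially equivalent to the identity being proved, so to avoid circularity you should establish it only for step functions directly from the covariance $\mathbb{E}[W^H(t)W^H(s)]=\tfrac12(t^{2H}+s^{2H}-|t-s|^{2H})$ (where it is an elementary computation) and then let your density-and-continuity step do the rest --- which is exactly how the cited references proceed. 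With that ordering made explicit, your argument is complete; as written it risks assuming the spectral form of the answer before deriving it.
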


\begin{lemma}[\cite{Kloeden.1992Nsosde,Mishura.2008ScffBmarp}]\label{Lemitoeqge05}
	For $H\in(1/2,1)$ and $g_{1}(t),g_{2}(t)\in L^2([0,T])$, there holds
	\begin{equation}\label{equisog05}
		\begin{aligned}
			&\mathbb{E} \left [\int_{0}^{T} g_{1}(r) d W^{H}(r) \int_{0}^{T} g_{2}(r) d W^{H}(r)\right ]\\
			&\qquad\qquad\qquad=H(2 H-1) \int_{0}^{T} \int_{0}^{T} g_{1}(r_{1}) g_{2}(r_{2})|r_{1}-r_{2}|^{2 H-2} d r_{1} d r_{2},
		\end{aligned}
	\end{equation}
	where $W^H$ means one-dimensional fBm.
\end{lemma}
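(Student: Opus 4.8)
The plan is to reduce the identity to the known covariance structure of one-dimensional fractional Brownian motion and then extend by bilinearity and density. Since both sides of the claimed equality are symmetric bilinear forms in the pair $(g_1,g_2)$, it suffices to verify the identity on a dense class of simple integrands and then pass to the limit. Writing $I(g)=\int_0^T g(r)\,dW^H(r)$ for the Wiener integral, I would first recall that $W^H$ is the centered Gaussian process with covariance
\[
R_H(t,s)=\mathbb{E}[W^H(t)W^H(s)]=\tfrac{1}{2}\left(t^{2H}+s^{2H}-|t-s|^{2H}\right),
\]
so that for the indicator integrands $g_1=\mathbf{1}_{[0,t]}$, $g_2=\mathbf{1}_{[0,s]}$ one has $I(g_1)=W^H(t)$, $I(g_2)=W^H(s)$, and the left-hand side is exactly $R_H(t,s)$.

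The key computation is then to check that $R_H$ admits the double-integral representation
\[
R_H(t,s)=H(2H-1)\int_0^t\int_0^s |u-v|^{2H-2}\,du\,dv.
\]
I would establish this by differentiating: for $t\neq s$ a direct calculation gives $\partial_t\partial_s R_H(t,s)=H(2H-1)|t-s|^{2H-2}$ (the pure power terms $t^{2H},s^{2H}$ are annihilated by the mixed derivative, so only $-\tfrac12|t-s|^{2H}$ contributes), while $R_H(0,s)=R_H(t,0)=0$ supplies the boundary data; integrating back recovers the displayed formula, since the right-hand side has the same mixed derivative and vanishing boundary values. Extending by bilinearity, for step functions $g_1,g_2$ (finite linear combinations of indicators of subintervals) both sides of the lemma coincide with the same finite double sum, so the identity holds for all simple integrands.

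It remains to pass from step functions to arbitrary $g_1,g_2\in L^2([0,T])$, and this is where the restriction $H\in(1/2,1)$ is essential and where the only real obstacle lies. Since $2H-2\in(-1,0)$ the kernel $|r_1-r_2|^{2H-2}$ is locally integrable, and the Hardy--Littlewood--Sobolev inequality yields the boundedness
\[
\int_0^T\!\!\int_0^T |g_1(r_1)|\,|g_2(r_2)|\,|r_1-r_2|^{2H-2}\,dr_1\,dr_2\le C\|g_1\|_{L^2([0,T])}\|g_2\|_{L^2([0,T])}.
\]
This shows that the bilinear form on the right is continuous for the $L^2$ topology; together with the $L^2(\Omega)$ continuity of $g\mapsto I(g)$ inherent in the $H>1/2$ Wiener-integral construction, it lets me approximate arbitrary $L^2$ integrands by step functions and take limits on both sides simultaneously, completing the proof. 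The main technical point to get right is precisely this continuity estimate: without the integrability $2H-2>-1$ the double integral need not be finite on $L^2$, which is exactly why the complementary range $H<1/2$ (Lemma~\ref{Lemitoeql05}) requires the stronger fractional-Sobolev integrands $H^{\frac{1-2H}{2}}_0$ rather than plain $L^2$.
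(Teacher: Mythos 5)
The paper does not prove this lemma at all: it is quoted verbatim from the cited references (Kloeden--Platen, Mishura) as a known property of the Wiener integral with respect to fBm with $H>1/2$. Your argument is correct and is essentially the standard proof found in those references --- verifying the double-integral representation of the covariance $R_H$ on indicators via the mixed derivative, extending by bilinearity to step functions, and passing to $L^2$ integrands via the Hardy--Littlewood--Sobolev bound (which, via $L^2([0,T])\hookrightarrow L^{1/H}([0,T])$, gives both the continuity of the right-hand side and the $L^2(\Omega)$-continuity of $g\mapsto I(g)$ needed to take the limit on the left).
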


Furthermore, we provide a lemma which plays a key role in this paper.

\begin{lemma}\label{eqcorleml}
	Let $g\in L^{2}([0,T])$,	${}_{0}\partial^{\frac{1-2H}{2}}_{t}g\in L^{2}([0,T])$ and $H\in(0,1)$. Then we have
	\begin{equation*}
		\begin{aligned}
			&\mathbb{E} \left (\int_{0}^{T} g(r) d W^{H}(r) \int_{0}^{T} g(r) d W^{H}(r)\right )\leq C\left \|{}_{0}\partial^{\frac{1-2H}{2}}_{t}g\right \|^{2}_{L^{2}([0,T])},\\
			&\mathbb{E} \left (\int_{0}^{T} g(T-r) d W^{H}(r) \int_{0}^{T} g(T-r) d W^{H}(r)\right )\leq C\left \|{}_{0}\partial^{\frac{1-2H}{2}}_{t}g\right \|^{2}_{L^{2}([0,T])}.
		\end{aligned}
	\end{equation*}
\end{lemma}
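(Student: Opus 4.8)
The plan is to treat the three regimes $H=\tfrac12$, $H\in(\tfrac12,1)$, and $H\in(0,\tfrac12)$ separately, because the covariance of the It\^o--fBm integral takes a structurally different form in each; in every case the goal is to reduce the second moment to a bilinear functional of $g$ that is dominated by $\|{}_{0}\partial^{\frac{1-2H}{2}}_{t}g\|_{L^{2}([0,T])}^{2}$. The borderline case $H=\tfrac12$ is immediate: then $(1-2H)/2=0$, $W^{1/2}$ is a standard Brownian motion, and the assertion is exactly the It\^o isometry $\mathbb{E}(\int_{0}^{T}g\,dW^{1/2})^{2}=\|g\|_{L^{2}([0,T])}^{2}$, so $C=1$.

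For $H\in(\tfrac12,1)$ I would apply Lemma~\ref{Lemitoeqge05} with $g_{1}=g_{2}=g$ to rewrite the left-hand side as $H(2H-1)\int_{0}^{T}\int_{0}^{T}g(r_{1})g(r_{2})|r_{1}-r_{2}|^{2H-2}\,dr_{1}\,dr_{2}$. Writing $\nu=H-\tfrac12\in(0,\tfrac12)$ gives $2H-2=2\nu-1$ and $(1-2H)/2=-\nu$, so ${}_{0}\partial^{\frac{1-2H}{2}}_{t}={}_{0}\partial^{-\nu}_{t}$ is the Riemann--Liouville fractional integral of order $\nu$. Extending $g$ by zero so that its support lies in $(0,T)$, the first bound of Lemma~\ref{lemIIiso} applies directly and yields $\int_{0}^{T}\int_{0}^{T}g(\xi)g(\eta)|\eta-\xi|^{2\nu-1}\,d\eta\,d\xi\le C\|{}_{0}\partial^{-\nu}_{t}g\|_{L^{2}([0,T])}^{2}$, which is the claim.

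For $H\in(0,\tfrac12)$, set $\mu=(1-2H)/2\in(0,\tfrac12)$. Since ${}_{0}\partial^{\mu}_{t}g\in L^{2}$, the equivalence \eqref{eqequiHandJ} places $g\in H^{\mu}(0,T)$, and as $\mu<\tfrac12$ Remark~\ref{reeqsob} identifies this with $H_{0}^{\mu}$, so Lemma~\ref{Lemitoeql05} applies and expresses the second moment as $\tfrac12 H(1-2H)\int_{\mathbb{R}}\int_{\mathbb{R}}(g(r_{1})-g(r_{2}))^{2}|r_{1}-r_{2}|^{2H-2}\,dr_{1}\,dr_{2}$. Because $2-2H=1+2\mu$, this is a constant multiple of the Gagliardo seminorm of $g$ over $\mathbb{R}$, which by Plancherel equals $C\||\omega|^{\mu}\mathcal{F}(g)\|_{L^{2}(\mathbb{R})}^{2}\le C\|(1+|\omega|^{2})^{\mu/2}\mathcal{F}(g)\|_{L^{2}(\mathbb{R})}^{2}=C\|g\|_{H_{0}^{\mu}(0,T)}^{2}$. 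The decisive and most delicate step is then the conversion of this Sobolev norm into the one-sided derivative norm: using Remark~\ref{reeqsob} to pass to $\|g\|_{H^{\mu}(0,T)}=\|g\|_{L^{2}}+|g|_{H^{\mu}(0,T)}$, bounding the seminorm by $\|{}_{0}\partial^{\mu}_{t}g\|_{L^{2}}$ via \eqref{eqequiHandJ}, and absorbing the $L^{2}$ part through the identity $g={}_{0}\partial^{-\mu}_{t}({}_{0}\partial^{\mu}_{t}g)$ together with the $L^{2}([0,T])$-boundedness of the fractional integral ${}_{0}\partial^{-\mu}_{t}$ (Young's inequality, the kernel $t^{\mu-1}/\Gamma(\mu)$ being in $L^{1}([0,T])$). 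Threading this chain of equivalent norms without losing control of the constant is where I expect the real effort to lie.

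Finally, the second inequality follows from the first without extra analysis. Replacing $g$ by $g(T-\cdot)$ in the bilinear forms provided by Lemmas~\ref{Lemitoeql05} and \ref{Lemitoeqge05} (and in the It\^o isometry for $H=\tfrac12$) and substituting $r_{i}\mapsto T-r_{i}$ leaves the integrals invariant, since $|(T-r_{1})-(T-r_{2})|=|r_{1}-r_{2}|$ and the zero-extended $g(T-\cdot)$ is again supported in $[0,T]$. Hence $\mathbb{E}(\int_{0}^{T}g(T-r)\,dW^{H}(r))^{2}=\mathbb{E}(\int_{0}^{T}g(r)\,dW^{H}(r))^{2}$, and the bound just established transfers verbatim.
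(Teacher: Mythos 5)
Your proposal is correct and follows essentially the same route as the paper: the same three-way case split at $H=\tfrac12$, the same use of Lemma~\ref{Lemitoeqge05} together with Lemma~\ref{lemIIiso} for $H\in(\tfrac12,1)$, and the same chain Lemma~\ref{Lemitoeql05} $\to$ Remark~\ref{reeqsob} $\to$ \eqref{eqequiHandJ} for $H\in(0,\tfrac12)$, with the reflected case handled by the substitution $r\mapsto T-r$. Your explicit absorption of the $L^{2}$ part via $g={}_{0}\partial^{-\mu}_{t}({}_{0}\partial^{\mu}_{t}g)$ and Young's inequality is a detail the paper leaves implicit, but it is the right justification.
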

\begin{proof}
	When $H=\frac{1}{2}$, the desired results hold directly.
	As for $H\in(0,\frac{1}{2})$, using Lemma \ref{Lemitoeql05}, Eq. \eqref{eqequiHandJ}, and Remark \ref{reeqsob}, one has
	\begin{equation*}
		\begin{aligned}
			&\mathbb{E} \left (\int_{0}^{T} g(r) d W^{H}(r) \int_{0}^{T} g(r) d W^{H}(r)\right )\\
			=& \frac{1}{2}H(1-2H) \int_{\mathbb{R}} \int_{\mathbb{R}} \frac{(\bar{g}(r_{1})-\bar{g}(r_{2}))^{2}}{|r_{1}-r_{2}|^{2 -2H}} d r_{1} d r_{2}\\
			\leq& C\|g\|^{2}_{H^{\frac{1-2H}{2}}([0,T])}\\
			\leq &C\left \|{}_{0}\partial^{\frac{1-2H}{2}}_{t}g\right \|^{2}_{L^{2}([0,T])},
		\end{aligned}
	\end{equation*}
	where $\bar{g}$ is the zero extension of $g$. As for $H\in(\frac{1}{2},1)$,  Lemmas \ref{lemIIiso} and \ref{Lemitoeqge05} give
	\begin{equation*}
		\begin{aligned}
			&\mathbb{E} \left (\int_{0}^{T} g(r) d W^{H}(r) \int_{0}^{T} g(r) d W^{H}(r)\right )\\
			=& H(2H-1) \int_{0}^{T} \int_{0}^{T} \frac{g(r_{1})g(r_{2})}{|r_{1}-r_{2}|^{2 -2H}} d r_{1} d r_{2}\\
			\leq &C\left \|{}_{0}\partial^{\frac{1-2H}{2}}_{t}g\right \|^{2}_{L^{2}([0,T])}.
		\end{aligned}
	\end{equation*}
	As for the second estimate, it can be got similarly.
\end{proof}
\begin{remark}
	Different from the skills used in \cite{Arezoomandan.2021ScmfspdewfBm,Li.2017GFEAfSSTFWE,Liu.2021HOAfSSFWEFbaASTGN,Wang.2017SmsrrfSwfnaocrftna,Wu.2020AaotLsfsspdbistwn,Yan.2019OeeffspdewfBm}, with the help of Lemma \ref{eqcorleml}, we can
	obtain the corresponding regularity and error estimates for $H\in(0,1)$ by Laplace transform and operator theory approach.
\end{remark}
\begin{lemma}[\cite{Laptev.1997DaNEPoDiES,Li.1983OtSeatep}]\label{thmeigenvalue}
	Let $D$ be a bounded domain in $\mathbb{R}^d \,(d=1,2,3)$ and $\lambda_k$ the $k$-th eigenvalue of the Dirichlet boundary problem for the Laplace operator $A=-\Delta$ in $D$. Then, for all $k\geq 1$,
	\begin{equation*}
		\lambda_{k} \geq \frac{C_{d} d}{d+2} k^{2 / d}|D|^{-2 / d},
	\end{equation*}
	where $C_{d}=(2 \pi)^{2} B_{d}^{-2 / d}$, $|D|$ is the volume of $D$, and $B_d$ means the volume of the unit $d$-dimensional ball.
\end{lemma}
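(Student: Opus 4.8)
The plan is to derive this from the Li--Yau lower bound on the \emph{sum} of the first $k$ eigenvalues and then exploit the monotonicity of $\{\lambda_k\}$. First I would fix the $L^2(D)$-orthonormal (real) eigenfunctions $\phi_1,\dots,\phi_k$, extend each by zero to all of $\mathbb{R}^d$, and pass to the Fourier transform with the normalization $\hat\phi_j(\xi)=(2\pi)^{-d/2}\int_D\phi_j(x)e^{-\mathbf{i}\xi\cdot x}\,dx$. Setting $F(\xi)=\sum_{j=1}^k|\hat\phi_j(\xi)|^2$, the whole argument is organized around controlling this single nonnegative function.

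Next I would record three facts about $F$. Plancherel's theorem together with $\|\phi_j\|_{L^2}=1$ gives $\int_{\mathbb{R}^d}F(\xi)\,d\xi=k$; applying Plancherel to $\nabla\phi_j$ and using $\int_D|\nabla\phi_j|^2=\lambda_j$ gives $\int_{\mathbb{R}^d}|\xi|^2F(\xi)\,d\xi=\sum_{j=1}^k\lambda_j$; and Bessel's inequality applied to the orthonormal family $\{\phi_j\}$ tested against $e^{\mathbf{i}\xi\cdot x}\mathbf{1}_D$ yields the pointwise bound $F(\xi)\le |D|/(2\pi)^d=:M$, since $\sum_{j=1}^k\bigl|\int_D\phi_j(x)e^{-\mathbf{i}\xi\cdot x}\,dx\bigr|^2\le \|e^{\mathbf{i}\xi\cdot x}\|_{L^2(D)}^2=|D|$.

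The heart of the argument is then a rearrangement (bathtub) step: among all measurable $F$ with $0\le F\le M$ and $\int_{\mathbb{R}^d}F=k$, the quantity $\int_{\mathbb{R}^d}|\xi|^2F$ is minimized by $F^*=M\,\mathbf{1}_{\{|\xi|\le R\}}$, where $R$ is fixed by $MB_dR^d=k$, because $|\xi|^2$ is radially increasing and so the admissible mass should be piled up where $|\xi|^2$ is smallest. Computing $\int_{\mathbb{R}^d}|\xi|^2F^*=\frac{d}{d+2}MB_dR^{d+2}=\frac{d}{d+2}\,k\,(k/(MB_d))^{2/d}$ and substituting $M=|D|/(2\pi)^d$ produces the Li--Yau estimate $\sum_{j=1}^k\lambda_j\ge \frac{C_d d}{d+2}\,k^{1+2/d}|D|^{-2/d}$ with $C_d=(2\pi)^2B_d^{-2/d}$. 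Finally, since the eigenvalues are non-decreasing, $\lambda_k\ge \frac1k\sum_{j=1}^k\lambda_j$, and dividing the Li--Yau bound by $k$ yields exactly the claimed inequality.

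I expect the rearrangement step to be the main obstacle to make fully rigorous: one must justify that the constrained minimizer is precisely the indicator of a centered ball (the bathtub principle) and check that $F$ genuinely lies in the admissible class, in particular that $|\xi|^2F$ is integrable. The Fourier-normalization bookkeeping that pins down the exact constant $(2\pi)^2B_d^{-2/d}$ is delicate but routine, and the pointwise Bessel bound $F\le M$ is the other indispensable ingredient, as it is what turns the two moment identities for $F$ into a genuine lower bound on $\sum_{j=1}^k\lambda_j$.
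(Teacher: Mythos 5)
The paper does not prove this lemma at all: it is quoted directly from the cited references (Li--Yau and Laptev), so there is no in-paper argument to compare against. Your proposal is a correct and complete reconstruction of the standard Li--Yau proof from those references --- the three facts about $F$ (total mass $k$, second moment $\sum_{j\le k}\lambda_j$, pointwise bound $|D|/(2\pi)^d$ via Bessel), the bathtub minimization, and the final division by $k$ using monotonicity all check out, including the constant $C_d=(2\pi)^2B_d^{-2/d}$. The only remark worth adding is that the rearrangement step you flag as the main obstacle is in fact elementary here: with $R$ chosen so that $MB_dR^d=k$, one has $\int|\xi|^2(F-F^*)\ge R^2\int(F-F^*)=0$ directly, since $|\xi|^2\le R^2$ where $F-F^*\le 0$ and $|\xi|^2\ge R^2$ where $F-F^*\ge 0$, so no appeal to the full bathtub principle is needed.
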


\subsection{A priori estimate of the solution}
In this subsection, we provide the regularity of the solution  of Eq. \eqref{equretosol} in time and space.
Firstly,
we denote operator $\mathcal{R}(t)$ as
\begin{equation*}
	\mathcal{R}(t)=\frac{1}{2\pi\mathbf{i}}\int_{\Gamma_{\theta,\kappa}}e^{zt}z^{\alpha-1}(z^{\alpha}+A^{s})^{-1}dz
\end{equation*}
and it satisfies
\begin{equation}\label{equresolvent0}
	\|A^{\beta s}\tilde{\mathcal{R}}(z)\|\leq C|z|^{\beta\alpha-1}\quad \forall z\in \Sigma_{\theta}~~{\rm and}~~\beta\in[0,1].
\end{equation}
Also, we can rewrite $\mathcal{R}(t)$ as
\begin{equation*}
	\mathcal{R}(t)u=\sum_{k=1}^{\infty}E_{k}(t)(u,\phi_{k})\phi_{k},
\end{equation*}
where $\{E_{k}(t)\}_{k=1}^{\infty}$ are defined by
\begin{equation*}
	E_{k}(t)=\frac{1}{2\pi\mathbf{i}}\int_{\Gamma_{\theta,\kappa}}e^{zt}z^{\alpha-1}(z^{\alpha}+\lambda_{k}^{s})^{-1}dz,
\end{equation*}
and the following estimate holds
\begin{equation}\label{equresolvent}
	|\lambda_{k}^{ s\beta}\tilde{E}_{k}(z)|\leq C|z|^{\beta\alpha-1}\quad \forall z\in \Sigma_{\theta}\quad{\rm and }\quad \beta\in [0,1].
\end{equation}

Thus by taking Laplace and inverse Laplace transforms, the mild solution of Eq. \eqref{equretosol} can be written as
\begin{equation}\label{eqrepsol}
	\begin{aligned}
		u=&\int_{0}^{t}\mathcal{R}(t-r)f(u(r))dr+\int_{0}^{t}\mathcal{R}(t-r)dW^{H}_{Q}(r)\\
		=&\int_{0}^{t}\mathcal{R}(t-r)f(u(r))dr+\sum_{k=1}^{\infty}\int_{0}^{t}\sqrt{\Lambda_{k}}E_{k}(t-r)\phi_{k}dW^{H}_{k}(r).
	\end{aligned}
\end{equation}

Then we can obtain the following spatial regularity of $u$.
\begin{theorem}\label{thmsobo}
	Let $u$ be the mild solution of Eq. $\eqref{equretosol}$ and $\|A^{-\rho}Q^{1/2}\|_{\mathcal{L}_{2}}<\infty$ with $\rho\in\left[0,\min(\frac{sH}{\alpha},s+\epsilon)\right)$ and $\alpha\in(0,1)$. Then the mild solution $u$ satisfies
	\begin{equation*}
		\mathbb{E}\|A^{\sigma }u\|^{2}_{\mathbb{H}}\leq C,
	\end{equation*}
	where $\sigma\in \left[0,\min(s-\rho,\frac{sH}{\alpha}-\rho-\epsilon)\right]$.
\end{theorem}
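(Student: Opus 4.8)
The plan is to split the representation \eqref{eqrepsol} as $u=I_1+I_2$ with drift part $I_1(t)=\int_0^t\mathcal{R}(t-r)f(u(r))\,dr$ and stochastic part $I_2(t)=\sum_{k=1}^\infty\int_0^t\sqrt{\Lambda_k}E_k(t-r)\phi_k\,dW_k^H(r)$, and to estimate $\mathbb{E}\|A^\sigma u\|_{\mathbb{H}}^2\le 2\mathbb{E}\|A^\sigma I_1\|_{\mathbb{H}}^2+2\mathbb{E}\|A^\sigma I_2\|_{\mathbb{H}}^2$ term by term. Since the growth condition \eqref{eqassptf} only bounds $\|f(u)\|_{\mathbb{H}}$ by $\|u\|_{\mathbb{H}}$, I would first settle the base case $\sigma=0$ to obtain $\sup_{t\in[0,T]}\mathbb{E}\|u(t)\|_{\mathbb{H}}^2\le C$, and then feed this uniform bound into the drift term for general $\sigma$.

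The heart of the proof is the stochastic part. Because $\{\phi_k\}$ is orthonormal in $\mathbb{H}$, the spatial norm diagonalizes and
\[
\mathbb{E}\|A^\sigma I_2(t)\|_{\mathbb{H}}^2=\sum_{k=1}^\infty\Lambda_k\lambda_k^{2\sigma}\,\mathbb{E}\Big(\int_0^t E_k(t-r)\,dW_k^H(r)\Big)^2.
\]
I would then apply the second estimate of Lemma \ref{eqcorleml} with $g=E_k$ to bound each summand by $\|{}_0\partial_t^{(1-2H)/2}E_k\|_{L^2([0,t])}^2$, which is exactly where the unified treatment of $H\in(0,1)$ enters. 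To control this norm I would pass to the Laplace domain: from $\widetilde{{}_0\partial_t^{(1-2H)/2}E_k}(z)=z^{(1-2H)/2}\tilde E_k(z)$ together with \eqref{equresolvent}, for any $\beta\in[0,1]$ one has $|z^{(1-2H)/2}\tilde E_k(z)|\le C\lambda_k^{-\beta s}|z|^{(1-2H)/2+\beta\alpha-1}$ on $\Gamma_{\theta,\kappa}$. Inverting the transform yields the pointwise bound $|{}_0\partial_t^{(1-2H)/2}E_k(t)|\le C\lambda_k^{-\beta s}t^{-((1-2H)/2+\beta\alpha)}$, whose square is integrable on $[0,t]$ precisely when $(1-2H)/2+\beta\alpha<1/2$, i.e.\ $\beta\alpha<H$, giving $\|{}_0\partial_t^{(1-2H)/2}E_k\|_{L^2([0,t])}^2\le C\lambda_k^{-2\beta s}$.

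Substituting back, $\mathbb{E}\|A^\sigma I_2\|_{\mathbb{H}}^2\le C\sum_{k=1}^\infty\Lambda_k\lambda_k^{2\sigma-2\beta s}$, and choosing $\beta=(\sigma+\rho)/s$ collapses the sum to $\sum_{k=1}^\infty\Lambda_k\lambda_k^{-2\rho}=\|A^{-\rho}Q^{1/2}\|_{\mathcal{L}_2}^2<\infty$. The two admissibility constraints $\beta\le 1$ and $\beta\alpha<H$ on this choice translate into $\sigma\le s-\rho$ and $\sigma<\tfrac{sH}{\alpha}-\rho$ respectively, which is why the stated range for $\sigma$ (with $-\epsilon$ absorbing the strict inequality) is the sharp one. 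For the drift part, \eqref{equresolvent0} with $\beta=\sigma/s\in[0,1]$ gives $\|A^\sigma\mathcal{R}(t)\|\le Ct^{-\sigma\alpha/s}$ with $\sigma\alpha/s<1$; combining this with \eqref{eqassptf}, the Cauchy--Schwarz inequality in $r$, and the base bound $\sup_r\mathbb{E}\|u(r)\|_{\mathbb{H}}^2\le C$ gives $\mathbb{E}\|A^\sigma I_1\|_{\mathbb{H}}^2\le C$. At $\sigma=0$ the very same steps instead produce $\mathbb{E}\|u(t)\|_{\mathbb{H}}^2\le C+C\int_0^t\mathbb{E}\|u(r)\|_{\mathbb{H}}^2\,dr$, which Gronwall's inequality closes to secure the base estimate.

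I expect the main obstacle to be the sharp estimate of $\|{}_0\partial_t^{(1-2H)/2}E_k\|_{L^2([0,t])}$: one must simultaneously track the $t$-singularity and the $\lambda_k$-decay through the inverse Laplace transform, and then tune the free parameter $\beta$ so that the Hilbert--Schmidt condition is met while respecting both $\beta\le 1$ and $\beta\alpha<H$. The delicate feature is that the differentiation order $(1-2H)/2$ changes sign at $H=1/2$; but since Lemma \ref{eqcorleml} already encodes the $H$-dependence uniformly, once in the Laplace domain the argument reduces to clean power counting.
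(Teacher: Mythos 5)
Your proposal is correct and follows essentially the same route as the paper: the same drift/noise splitting of \eqref{eqrepsol}, Lemma \ref{eqcorleml} applied to $g=E_k$, the resolvent bound \eqref{equresolvent} with the exponent $\beta=(\sigma+\rho)/s$ tuned against the Hilbert--Schmidt condition, and the constraints $\beta\le 1$, $\beta\alpha<H$ yielding exactly the stated range of $\sigma$. The only (harmless) difference is organizational: you first close the $\sigma=0$ case by Gr\"onwall and then feed the uniform bound into the drift term, whereas the paper runs a single Gr\"onwall argument at the level of general $\sigma$.
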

\begin{proof}
	Simple calculations give
	\begin{equation*}
		\begin{aligned}
			\mathbb{E}\|A^{\sigma }u\|^{2}_{\mathbb{H}}\leq& 2 \mathbb{E}\left \|\int_{0}^{t}A^{\sigma}\mathcal{R}(t-r)f(u(r))dr\right \|_{\mathbb{H}}^{2}+2 \mathbb{E}\left \|\int_{0}^{t}A^{\sigma}\mathcal{R}(t-r)dW^{H}_{Q}(r)\right \|_{\mathbb{H}}^{2}\\
			\leq&\uppercase\expandafter{\romannumeral1}+\uppercase\expandafter{\romannumeral2}.
		\end{aligned}
	\end{equation*}
	
	As for $\uppercase\expandafter{\romannumeral1}$, the resolvent estimate \eqref{equresolvent0}, assumption \eqref{eqassptf}, and the Cauchy-Schwarz inequality lead to
	\begin{equation*}
		\begin{aligned}
			\uppercase\expandafter{\romannumeral1}\leq& C\mathbb{E}\left (\int_{0}^{t}(t-r)^{-\sigma\alpha/s}(1+\|u(r)\|_{\mathbb{H}})dr\right )^{2}\\
			\leq & C\mathbb{E}\left (\int_{0}^{t}(t-r)^{-2\sigma\alpha/s+1-\epsilon}(1+\|u(r)\|_{\mathbb{H}}^{2})dr\right) \\
			\leq& C+C\int_{0}^{t}(t-r)^{-2\sigma\alpha/s+1-\epsilon}\mathbb{E}\|u(r)\|_{\mathbb{H}}^{2}dr,
		\end{aligned}
	\end{equation*}
	where we need to require $-2\sigma\alpha/s+1>-1$, i.e., $\sigma<\frac{s}{\alpha}$.
	
	As for $\uppercase\expandafter{\romannumeral2}$, using \eqref{eqrepsol}, Lemma \ref{eqcorleml}, and $\|A^{-\rho}Q^{1/2}\|_{\mathcal{L}_{2}}<\infty$, we have
	\begin{equation*}
		\begin{aligned}
			\uppercase\expandafter{\romannumeral2} &
\\
\leq& C\sum_{k=1}^{\infty}\mathbb{E}\left (\int_{0}^{t}\lambda_{k}^{\sigma }\sqrt{\Lambda_{k}}E_{k}(t-r)\phi_{k}dW^{H}_{k}(r),\int_{0}^{t}\lambda_{k}^{\sigma }\sqrt{\Lambda_{k}}E_{k}(t-r)\phi_{k}dW^{H}_{k}(r)\right )\\
			\leq&C\sum_{k=1}^{\infty}\int_{0}^{t}|\lambda_{k}^{\sigma }\sqrt{\Lambda_{k}}{~}_{0}\partial_{r}^{\frac{1-2H}{2}}E_{k}(r)|^{2}dr\\
			\leq& C\sup_{k\in \mathbb{N^{*}}}\int_{0}^{t}|\lambda_{k}^{\sigma +\rho}{~}_{0}\partial_{r}^{\frac{1-2H}{2}}E_{k}(r)|^{2}dr.
		\end{aligned}
	\end{equation*}
	By the definition of $E_{k}(r)$ and resolvent estimate \eqref{equresolvent}, one can get
	\begin{equation*}
		\begin{aligned}
			&\int_{0}^{t}|\lambda_{k}^{\sigma+\rho}{~}_{0}\partial_{r}^{\frac{1-2H}{2}}E_{k}(r)|^{2}dr\\
			\leq &C\int_{0}^{t}\left |\int_{\Gamma_{\theta,\kappa}}e^{zr}\lambda_{k}^{\sigma+\rho}z^{\frac{1-2H}{2}}\tilde{E}_{k}(z)dz\right |^{2}dr\\
			\leq &C\int_{0}^{t}\left (\int_{\Gamma_{\theta,\kappa}}|e^{zr}||z|^{\frac{1-2H}{2}+(\sigma+\rho)\alpha/s-1}|dz|\right )^{2}dr\\
			\leq& C\int_{0}^{t} r^{2H-1-2(\sigma+\rho)\alpha/s}dr.
		\end{aligned}
	\end{equation*}
	To preserve the boundedness of $\mathbb{E}\|A^{\sigma}u\|^{2}_{\mathbb{H}}$, we require $2H-1-2(\sigma+\rho)\alpha/s>-1$, which leads to $\sigma<\frac{sH}{\alpha}-\rho$. Combining the estimates of $\uppercase\expandafter{\romannumeral1}$ and $\uppercase\expandafter{\romannumeral2}$ and the Gr\"onwall inequality, the desired result is obtained.
\end{proof}

In the following, we provide the H\"{o}lder regularity of the mild solution $u$.
\begin{theorem}\label{thmholder}
	Let $u$ be the mild solution of Eq. $\eqref{equretosol}$ and $\|A^{-\rho}Q^{1/2}\|_{\mathcal{L}_{2}}<\infty$ with $\rho\in \left[0,\frac{sH}{\alpha}\right)\cap[0,s]$ and $\alpha\in(0,1)$. Then we have
	\begin{equation*}
		\mathbb{E}\left \|\frac{u(t)-u(t-\tau)}{\tau^{\gamma}}\right \|^{2}_{\mathbb{H}}\leq C,
	\end{equation*}
	where $\gamma\in \left[0,H-\frac{\rho\alpha}{s}\right)$.
\end{theorem}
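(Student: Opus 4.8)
The plan is to start from the mild-solution representation \eqref{eqrepsol} and write the increment as $u(t)-u(t-\tau)=F+N$, where $F=\int_0^t\mathcal{R}(t-r)f(u(r))\,dr-\int_0^{t-\tau}\mathcal{R}(t-\tau-r)f(u(r))\,dr$ collects the two nonlinear convolution integrals and $N=\sum_{k=1}^{\infty}\sqrt{\Lambda_k}\big[\int_0^t E_k(t-r)\,dW^H_k(r)-\int_0^{t-\tau}E_k(t-\tau-r)\,dW^H_k(r)\big]\phi_k$ collects the two stochastic ones. I will prove $\mathbb{E}\|F\|_{\mathbb{H}}^2\le C\tau^{2\gamma}$ and $\mathbb{E}\|N\|_{\mathbb{H}}^2\le C\tau^{2\gamma}$; dividing by $\tau^{2\gamma}$ yields the claim. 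Throughout I use the a priori bound $\sup_{r\in[0,T]}\mathbb{E}\|u(r)\|_{\mathbb{H}}^2\le C$, which is Theorem \ref{thmsobo} with $\sigma=0$, legitimate because the present hypotheses force $\rho\in[0,\min(sH/\alpha,s+\epsilon))$. I also record $\|\mathcal{R}(t)\|\le C$, obtained from \eqref{equresolvent0} with $\beta=0$ and contour deformation.

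For $F$, I split it into the boundary layer $F_1=\int_{t-\tau}^t\mathcal{R}(t-r)f(u(r))\,dr$ and the smoothing difference $F_2=\int_0^{t-\tau}[\mathcal{R}(t-r)-\mathcal{R}(t-\tau-r)]f(u(r))\,dr$. For $F_1$, the bound $\|\mathcal{R}(t)\|\le C$, the Cauchy--Schwarz inequality, the growth condition \eqref{eqassptf}, and the a priori estimate give $\mathbb{E}\|F_1\|_{\mathbb{H}}^2\le C\tau^2\le C\tau^{2\gamma}$ since $\gamma<H<1$. For $F_2$, writing $\mathcal{R}(a+\tau)-\mathcal{R}(a)=\frac{1}{2\pi\mathbf{i}}\int_{\Gamma_{\theta,\kappa}}e^{za}(e^{z\tau}-1)z^{\alpha-1}(z^\alpha+A^s)^{-1}\,dz$ with $a=t-\tau-r$, the elementary bound $|e^{z\tau}-1|\le C|z\tau|^\gamma$ together with $\|z^{\alpha-1}(z^\alpha+A^s)^{-1}\|\le C|z|^{-1}$ yields the H\"older-in-time operator estimate $\|\mathcal{R}(a+\tau)-\mathcal{R}(a)\|\le C\tau^\gamma a^{-\gamma}$ for $\gamma\in[0,1)$ (here $\mathrm{Re}\,z<0$ on the rays makes $|e^{z\tau}|<1$, so the contour factor stays bounded). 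As $\gamma<1$ the kernel $(t-\tau-r)^{-\gamma}$ is integrable, and Cauchy--Schwarz with the a priori estimate again give $\mathbb{E}\|F_2\|_{\mathbb{H}}^2\le C\tau^{2\gamma}$.

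The stochastic term $N$ is the crux. For each fixed $k$ the two integrals are driven by the \emph{same} scalar fBm $W^H_k$, so I combine them into one integral; passing to reversed time $\eta=t-r$ the two integrands become $E_k(\eta)$ and the causal shift $E_k(\eta-\tau)\mathbf{1}_{\{\eta\ge\tau\}}$, so $N=\sum_k\sqrt{\Lambda_k}\big(\int_0^t g_k(t-r)\,dW^H_k(r)\big)\phi_k$ with $g_k(\eta)=E_k(\eta)-E_k(\eta-\tau)$ (causal convention). By orthonormality of $\{\phi_k\}$, independence of $\{W^H_k\}$, and the second estimate of Lemma \ref{eqcorleml}, $\mathbb{E}\|N\|_{\mathbb{H}}^2\le C\sum_k\Lambda_k\|{}_0\partial_\eta^{\frac{1-2H}{2}}g_k\|_{L^2([0,t])}^2$. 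Factoring $\Lambda_k=\lambda_k^{-2\rho}\,\lambda_k^{2\rho}\Lambda_k$ and using $\|A^{-\rho}Q^{1/2}\|_{\mathcal{L}_2}^2=\sum_k\lambda_k^{-2\rho}\Lambda_k<\infty$ reduces matters to $\sup_k\int_0^t|\lambda_k^{\rho}\,{}_0\partial_\eta^{\frac{1-2H}{2}}g_k(\eta)|^2\,d\eta$. Since $\tilde{g}_k(z)=(1-e^{-z\tau})\tilde{E}_k(z)$, I represent ${}_0\partial_\eta^{\frac{1-2H}{2}}g_k$ by the inverse Laplace transform $\frac{1}{2\pi\mathbf{i}}\int_{\Gamma_{\theta,\kappa}}e^{z\eta}z^{\frac{1-2H}{2}}(1-e^{-z\tau})\tilde{E}_k(z)\,dz$ and estimate using $|1-e^{-z\tau}|\le C|z\tau|^\gamma$ and the resolvent bound $|\lambda_k^{\rho}\tilde{E}_k(z)|\le C|z|^{\rho\alpha/s-1}$ from \eqref{equresolvent} (valid as $\rho\le s$). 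A moving-contour deformation with $\kappa\sim1/\eta$ gives the pointwise bound $|\lambda_k^{\rho}\,{}_0\partial_\eta^{\frac{1-2H}{2}}g_k(\eta)|\le C\tau^\gamma\eta^{H-\frac12-\gamma-\rho\alpha/s}$, whence the time integral is $\le C\tau^{2\gamma}\int_0^t\eta^{2H-1-2\gamma-2\rho\alpha/s}\,d\eta$, which converges at $\eta=0$ precisely when $2H-1-2\gamma-2\rho\alpha/s>-1$, i.e. $\gamma<H-\rho\alpha/s$ --- exactly the admissible range in the statement.

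I expect the main obstacle to be the contour estimate of the factor $(1-e^{-z\tau})$ in $N$, since $\mathrm{Re}\,z<0$ on the rays of $\Gamma_{\theta,\kappa}$ makes $|1-e^{-z\tau}|$ grow exponentially there, so the polynomial bound $|1-e^{-z\tau}|\le C|z\tau|^\gamma$ is \emph{not} uniformly valid (in contrast to the $e^{z\tau}-1$ of $F_2$). I plan to handle this by splitting the $\eta$-integral: on $[0,\tau)$ the nonlocal derivative only sees $g_k=E_k$, so the estimate reduces to the one already established in Theorem \ref{thmsobo} and contributes $C\tau^{2H-2\rho\alpha/s}\le C\tau^{2\gamma}$; on $[\tau,t]$ the moving contour $\kappa\sim1/\eta$ gives $|z|\tau\approx\tau/\eta\le1$ on the arc, where $|1-e^{-z\tau}|\le C|z\tau|^\gamma$ holds, while on the far rays the decay of $e^{z\eta}$ (with $\eta\ge\tau$) absorbs the growth of $|1-e^{-z\tau}|$. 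The nonlinearity enters only through the a priori bound and \eqref{eqassptf}, so it poses no extra difficulty at this stage, unlike in the temporal error analysis.
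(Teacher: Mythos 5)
Your proposal is correct and, for the drift term, coincides with the paper's argument: split into the boundary layer $\int_{t-\tau}^{t}$ and the smoothing difference on $[0,t-\tau]$, then use $|e^{z\tau}-1|\leq C|z\tau|^{\gamma}$ on $\Gamma_{\theta,\kappa}$ together with \eqref{eqassptf} and the a priori bound. For the stochastic term you take a genuinely different (and arguably cleaner) route. The paper first splits the noise increment into the kernel difference over $[0,t-\tau]$ and the boundary layer over $[t-\tau,t]$, applies Lemma \ref{eqcorleml} to the first, and then re-derives the second-moment bound for the boundary layer by a separate three-case analysis ($H<1/2$, $H=1/2$, $H>1/2$) via Lemmas \ref{lemIIiso}, \ref{Lemitoeql05}, and \ref{Lemitoeqge05}. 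You instead merge both stochastic integrals into a single one with kernel $g_{k}(\eta)=E_{k}(\eta)-E_{k}(\eta-\tau)\mathbf{1}_{\{\eta\geq\tau\}}$, apply Lemma \ref{eqcorleml} once, and split the resulting $L^{2}$-norm of ${}_{0}\partial^{\frac{1-2H}{2}}_{\eta}g_{k}$ at $\eta=\tau$; this treats all $H$ uniformly and avoids repeating the case analysis, at the price of exactly the contour issue you flagged. Your fix works, but to make it precise you should factor $1-e^{-z\tau}=-e^{-z\tau}(e^{z\tau}-1)$ so that the surviving exponential on the rays is $e^{z(\eta-\tau)}$; with that, the correct pointwise bound for $\eta>\tau$ is $C\tau^{\gamma}(\eta-\tau)^{H-\frac{1}{2}-\gamma-\rho\alpha/s}$ rather than the $C\tau^{\gamma}\eta^{H-\frac{1}{2}-\gamma-\rho\alpha/s}$ you wrote (the kernel $g_{k}$ has a unit jump at $\eta=\tau$, so for $H<1/2$ the fractional derivative genuinely blows up like $(\eta-\tau)^{-\frac{1-2H}{2}}$ there, and no bound finite at $\eta=\tau$ can hold). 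Since $2H-1-2\gamma-2\rho\alpha/s>-1$ under your constraint $\gamma<H-\rho\alpha/s$, the corrected bound is still square-integrable on $[\tau,t]$ and the conclusion $\mathbb{E}\|N\|^{2}_{\mathbb{H}}\leq C\tau^{2\gamma}$ is unaffected; the remaining pieces of your argument match the paper's estimates line for line.
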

\begin{proof}
	We first divide $\mathbb{E}\left \|\frac{u(t)-u(t-\tau)}{\tau^{\gamma}}\right \|_{\mathbb{H}}^{2}$ into two parts
	\begin{equation*}
		\begin{aligned}
			&\mathbb{E}\left \|\frac{u(t)-u(t-\tau)}{\tau^{\gamma}}\right \|_{\mathbb{H}}^{2}\\
			\leq&\mathbb{E}\left \|\frac{\int_{0}^{t}\mathcal{R}(t-r)f(u(r))dr-\int_{0}^{t-\tau}\mathcal{R}(t-\tau-r)f(u(r))dr}{\tau^{\gamma}}\right \|_{\mathbb{H}}^{2}\\
			&+\mathbb{E}\left \|\frac{\int_{0}^{t}\mathcal{R}(t-r)dW^{H}_{Q}(r)-\int_{0}^{t-\tau}\mathcal{R}(t-\tau-r)dW^{H}_{Q}(r)}{\tau^{\gamma}}\right \|_{\mathbb{H}}^{2}\\
			\leq& \uppercase\expandafter{\romannumeral1}+\uppercase\expandafter{\romannumeral2}.
		\end{aligned}
	\end{equation*}

	As for $\uppercase\expandafter{\romannumeral1}$, we have
	\begin{equation*}
		\begin{aligned}
			\uppercase\expandafter{\romannumeral1}\leq& \mathbb{E}\left \|\frac{\int_{0}^{t-\tau}(\mathcal{R}(t-r)-\mathcal{R}(t-r-\tau))f(u(r))dr}{\tau^{\gamma}}\right \|_{\mathbb{H}}^{2}\\
			&+\mathbb{E}\left \|\frac{\int_{t-\tau}^{t}\mathcal{R}(t-r)f(u(r))dr}{\tau^{\gamma}}\right \|_{\mathbb{H}}^{2}\\
			\leq&\uppercase\expandafter{\romannumeral1}_{1}+\uppercase\expandafter{\romannumeral1}_{2}.
		\end{aligned}
	\end{equation*}
	For $\uppercase\expandafter{\romannumeral1}_{1}$, the fact $|\frac{e^{z\tau}-1}{\tau^{\gamma}}|\leq C|z|^{\gamma}$ with $z\in\Gamma_{\theta,\kappa}$ and $\gamma\in[0,1]$ \cite{Gunzburger.2018ScrotdfstfPstastwn} and Eq. \eqref{eqassptf} give
	\begin{equation*}
		\begin{aligned}
			\uppercase\expandafter{\romannumeral1}_{1}\leq &C\mathbb{E}\left \|\int_{0}^{t-\tau}\int_{\Gamma_{\theta, \kappa}}\frac{e^{z(t-r)}-e^{z(t-r-\tau)}}{\tau^{\gamma}}\tilde{\mathcal{R}}(z)dzf(u(r))dr\right \|_{\mathbb{H}}^{2}\\
			\leq &C\mathbb{E}\left (\int_{0}^{t-\tau}\int_{\Gamma_{\theta, \kappa}}|e^{z(t-r-\tau)}||z|^{\gamma-1}|dz|\|f(u(r))\|_{\mathbb{H}}dr\right) ^{2}\\
			\leq &C\mathbb{E}\left (\int_{0}^{t-\tau}(t-\tau-r)^{-\gamma}(1+\|u(r)\|_{\mathbb{H}})dr\right) ^{2}\\
			\leq& C,
		\end{aligned}
	\end{equation*}
	where we need to require $\gamma\in[0,1)$. Similarly,
	\begin{equation*}
		\begin{aligned}
			\uppercase\expandafter{\romannumeral1}_{2}\leq&C\mathbb{E}\left(\frac{\int_{t-\tau}^{t} \|f(u(r))\|_{\mathbb{H}}dr}{\tau^{\gamma}}\right) ^{2}\\
			\leq&C\tau^{1-2\gamma}\int_{t-\tau}^{t} \mathbb{E}\|f(u(r))\|_{\mathbb{H}}^{2}dr\\
			\leq& C,
		\end{aligned}
	\end{equation*}
	where we need to require $\gamma\in[0,1]$.
	
	As for $\uppercase\expandafter{\romannumeral2}$, there are	
	\begin{equation*}
		\begin{aligned}
			&\uppercase\expandafter{\romannumeral2}\\
			\leq& C\mathbb{E}\left \|\frac{\sum_{k=1}^{\infty}\left (\int_{0}^{t}\sqrt{\Lambda_{k}}E_{k}(t-r)\phi_{k}dW^{H}_{k}(r)-\int_{0}^{t-\tau}\sqrt{\Lambda_{k}}E_{k}(t-\tau-r)\phi_{k}dW^{H}_{k}(r)\right )}{\tau^{\gamma}}\right \|_{\mathbb{H}}^{2}\\
			\leq& C\mathbb{E}\left \|\frac{\sum_{k=1}^{\infty}\int_{0}^{t-\tau}\sqrt{\Lambda_{k}}(E_{k}(t-r)-E_{k}(t-\tau-r))\phi_{k}dW^{H}_{k}(r)}{\tau^{\gamma}}\right \|_{\mathbb{H}}^{2}\\
			&+C\mathbb{E}\left\|\frac{\sum_{k=1}^{\infty}\int_{t-\tau}^{t}\sqrt{\Lambda_{k}}E_{k}(t-r)\phi_{k}dW^{H}_{k}(r)}{\tau^{\gamma}}\right\|^{2}_{\mathbb{H}}\\
			\leq& \uppercase\expandafter{\romannumeral2}_{1}+\uppercase\expandafter{\romannumeral2}_{2}.
		\end{aligned}
	\end{equation*}
	
	For $\uppercase\expandafter{\romannumeral2}_{1}$, $\|A^{-\rho}Q^{1/2}\|<\infty$ and the definition of $E_{k}$ lead to
	\begin{equation*}
		\begin{aligned}
			\uppercase\expandafter{\romannumeral2}_{1}\leq& C\sum_{k=1}^{\infty}\int_{0}^{t-\tau}\left |\frac{\sqrt{\Lambda_{k}}{~}_{0}\partial^{\frac{1-2H}{2}}_{r}(E_{k}(r+\tau)-E_{k}(r))}{\tau^{\gamma}}\right |^{2}dr\\
			\leq& C\sup_{k\in\mathbb{N}^{*}}\int_{0}^{t-\tau}\left |\int_{\Gamma_{\theta,\kappa}}e^{zr}\frac{e^{z\tau}-1}{\tau^{\gamma}}\lambda_{k}^{\rho}z^{\frac{1-2H}{2}}\tilde{E}_{k}(z)dz\right |^{2}dr.
		\end{aligned}
	\end{equation*}
	Combining \eqref{equresolvent} and the fact $|\frac{e^{z\tau}-1}{\tau^{\gamma}}|\leq C|z|^{\gamma}$ with $z\in\Gamma_{\theta,\kappa}$ and $\gamma\in[0,1]$, one has
	\begin{equation*}
		\begin{aligned}
			&\int_{0}^{t-\tau}\left |\int_{\Gamma_{\theta,\kappa}}e^{zr}\frac{e^{z\tau}-1}{\tau^{\gamma}}\lambda_{k}^{\rho}z^{\frac{1-2H}{2}}\tilde{E}_{k}(z)dz\right |^{2}dr\\
			\leq &C\int_{0}^{t}\left (\int_{\Gamma_{\theta,\kappa}}|e^{zr}||z|^{\frac{1-2H}{2}+\gamma+\rho\alpha/s-1}|dz|\right )^{2}dr\\
			\leq& C\int_{0}^{t} r^{2H-1-2\gamma-2\rho\alpha/s}dr.
		\end{aligned}
	\end{equation*}
	To preserve the boundedness of $\uppercase\expandafter{\romannumeral2}_{1}$, $\gamma$ should satisfy $2H-1-2\gamma-2\rho\alpha/s>-1$, which yields $\gamma<H-\frac{\rho\alpha}{s}$.
	
	As for $\uppercase\expandafter{\romannumeral2}_{2}$, when $H=1/2$, the It{\^{o}} isometry shows
	\begin{equation*}
		\begin{aligned}
			\uppercase\expandafter{\romannumeral2}_{2}\leq& C\tau^{-2\gamma}\sum_{k=1}^{\infty}\int_{t-\tau}^{t}|\sqrt{\Lambda_{k}}E_{k}(t-r)|^{2}dr\\
			\leq & C\tau^{-2\gamma}\sum_{k=1}^{\infty}\int_{0}^{\tau}|\sqrt{\Lambda_{k}}E_{k}(r)|^{2}dr;
		\end{aligned}
	\end{equation*}
	when $H\in(1/2,1)$, by Lemmas \ref{lemIIiso} and \ref{Lemitoeqge05}, we deduce
	\begin{equation*}
		\begin{aligned}
			\uppercase\expandafter{\romannumeral2}_{2}\leq& C\tau^{-2\gamma}\sum_{k=1}^{\infty}\int_{t-\tau}^{t}\int_{t-\tau}^{t}\frac{\sqrt{\Lambda_{k}}E_{k}(t-r_{1})\sqrt{\Lambda_{k}}E_{k}(t-r_{2})}{|r_{1}-r_{2}|^{2-2H}}dr_{1}dr_{2}\\
			\leq& C\tau^{-2\gamma}\sum_{k=1}^{\infty}\int_{0}^{\tau}\int_{0}^{\tau}\frac{\sqrt{\Lambda_{k}}E_{k}(r_{1})\sqrt{\Lambda_{k}}E_{k}(r_{2})}{|r_{1}-r_{2}|^{2-2H}}dr_{1}dr_{2}\\
			\leq & C\tau^{-2\gamma}\sum_{k=1}^{\infty}\int_{0}^{\tau}|\sqrt{\Lambda_{k}}{~}_{0}\partial^{\frac{1-2H}{2}}_{r}E_{k}(r)|^{2}dr;
		\end{aligned}
	\end{equation*}
	when $H\in(0,1/2)$, by Lemma \ref{Lemitoeql05} and Remark \ref{reeqsob}, one can get
	\begin{equation*}
		\begin{aligned}
			&\uppercase\expandafter{\romannumeral2}_{2}\\ \leq& C\tau^{-2\gamma}\\
			&\,\cdot\sum_{k=1}^{\infty}\int_{\mathbb{R}}\int_{\mathbb{R}}\frac{(\sqrt{\Lambda_{k}}E_{k}(t-r_{1})\chi_{[t-\tau,t]}(r_{1})-\sqrt{\Lambda_{k}}E_{k}(t-r_{2})\chi_{[t-\tau,t]}(r_{2}))^{2}}{|r_{1}-r_{2}|^{2-2H}}dr_{1}dr_{2}\\
			\leq& C\tau^{-2\gamma}\sum_{k=1}^{\infty}\int_{0}^{\tau}\int_{0}^{\tau}\frac{(\sqrt{\Lambda_{k}}E_{k}(r_{1})-\sqrt{\Lambda_{k}}E_{k}(r_{2}))^{2}}{|r_{1}-r_{2}|^{2-2H}}dr_{1}dr_{2}\\
			\leq & C\tau^{-2\gamma}\sum_{k=1}^{\infty}\int_{0}^{\tau}|\sqrt{\Lambda_{k}}{~}_{0}\partial^{\frac{1-2H}{2}}_{r}E_{k}(r)|^{2}dr,
		\end{aligned}
	\end{equation*}
	where $\chi_{[a,b]}$ is the characteristic function on $[a,b]$.
	According to $\|A^{-\rho}Q^{1/2}\|_{\mathcal{L}_{2}}<\infty$, we have
	\begin{equation*}
		\begin{aligned}
			\uppercase\expandafter{\romannumeral2}_{2}
			\leq &C\tau^{-2\gamma}\sum_{k=1}^{\infty}\int_{0}^{\tau}|\sqrt{\Lambda_{k}}{~}_{0}\partial^{\frac{1-2H}{2}}_{r}E_{k}(r)|^{2}dr\\
			\leq &C\tau^{-2\gamma}\sup_{k\in\mathbb{N}^{*}}\int_{0}^{\tau}|\lambda_{k}^{\rho}{~}_{0}\partial^{\frac{1-2H}{2}}_{r}E_{k}(r)|^{2}dr.\\
		\end{aligned}
	\end{equation*}
	By \eqref{equresolvent}, one gets
	\begin{equation*}
		\begin{aligned}
			&\int_{0}^{\tau}|\lambda_{k}^{\rho}{}_{0}\partial^{\frac{1-2H}{2}}_{r}E_{k}(r)|^{2}dr\\
			\leq& C\int_{0}^{\tau}\left(\int_{\Gamma_{\theta,\kappa}}e^{zr}\lambda_{k}^{\rho}z^{\frac{1-2H}{2}}\tilde{E}_{k}(z)dz\right)^{2}dr\\
			\leq& C\int_{0}^{\tau}\left(\int_{\Gamma_{\theta,\kappa}}|e^{zr}||z|^{\frac{1-2H}{2}+\rho\alpha/s-1}|dz|\right)^{2}dr\\
			\leq& C\int_{0}^{\tau}r^{2H-1-2\rho\alpha/s}dr\\
			\leq& C\tau^{2H-2\rho\alpha/s},
		\end{aligned}
	\end{equation*}
	which implies $\gamma<H-\rho\alpha/s$. Thus the proof is completed.
\end{proof}

\section{Spatial discretization and error analysis}

In this section, we use a spectral Galerkin method to discretize fractional Laplacian and the error estimate is built. We first introduce a finite dimensional subspace of $\mathbb{H}$ by $\mathbb{H}_{N}={\rm span}\{\phi_{1},\phi_{2},\ldots,\phi_{N}\}$ with $N\in \mathbb{N}^{*}$ and define the projection operator $P_{N}:\mathbb{H}\rightarrow \mathbb{H}_{N}$ by
\begin{equation*}
	P_{N}u=\sum_{i=1}^{N}(u,\phi_{i})\phi_{i}\quad \forall u\in \mathbb{H}.
\end{equation*}
Define $A_{N}^{s}:\mathbb{H}_{N}\rightarrow \mathbb{H}_{N}$ as
\begin{equation*}
	(A_{N}^{s}u_{N},v_{N})=(A^{s}u_{N},v_{N})\quad \forall u_{N},v_{N}\in\mathbb{H}_{N}.
\end{equation*}
It is easy to verify that
\begin{equation*}
	A_{N}^{s}u_{N}=A_{N}^{s}P_{N}u_{N}=P_{N}A_{N}^{s}u_{N}=\sum_{k=1}^{N}\lambda_{k}^{s}(u_{N},\phi_{k})\phi_{k}.
\end{equation*}
Thus the spectral Galerkin semidiscrete scheme of (\ref{equretosol}) can be written as: find $u_{N}(t)\in\mathbb{H}_{N}$ satisfying
\begin{equation}\label{eqsemischeme}
	\left \{
	\begin{split}
		&\partial_{t} u_{N}+\!_0\partial^{1-\alpha}_tA^{s}_{N} u_{N}
		=P_{N}f(u_{N})+P_{N}\dot{W}^{H}_{Q} \,~\qquad\quad \ t\in(0,T],\\
		&u_{N}(0)=0.
	\end{split}
	\right .
\end{equation}
Taking Laplace transform and inverse Laplace transform gives
\begin{equation*}
	u_{N}=\int_{0}^{t}\mathcal{R}_{N}(t-r)P_{N}f(u_{N}(r))dr+\int_{0}^{t}\mathcal{R}_{N}(t-r)P_{N}dW^{H}_{Q}(r),
\end{equation*}
where
\begin{equation*}
	\mathcal{R}_{N}(t)=\frac{1}{2\pi\mathbf{i}}\int_{\Gamma_{\theta,\kappa}}e^{zt}z^{\alpha-1}(z^{\alpha}+A_{N}^{s})^{-1}dz.
\end{equation*}
By the definitions of $A^{s}_{N}$ and $P_{N}$, we rewrite $u_{N}$ as
\begin{equation}\label{eqsemiresol}
	u_{N}=\int_{0}^{t}\mathcal{R}_{N}(t-r)P_{N}f(u_{N}(r))dr+\sum_{k=1}^{N}\int_{0}^{t}\sqrt{\Lambda_{k}}E_{k}(t-r)\phi_{k}dW^{H}_{k}(r).
\end{equation}

Similar to the proofs of Theorems \ref{thmsobo} and \ref{thmholder}, one can get the following two estimates on $u_{N}$.
\begin{theorem}\label{thmsoboun}
	Let $u_{N}$ be the mild solution of Eq. $\eqref{eqsemischeme}$ and $\|A^{-\rho}Q^{1/2}\|_{\mathcal{L}_{2}}<\infty$ with $\rho\in\left[0,\min(\frac{sH}{\alpha},s+\epsilon)\right)$ and $\alpha\in(0,1)$. Then the mild solution $u_{N}$ satisfies
	\begin{equation*}
		\mathbb{E}\|A^{\sigma }u_{N}\|^{2}_{\mathbb{H}}\leq C,
	\end{equation*}
	where $\sigma\in \left[0,\min(s-\rho,\frac{sH}{\alpha}-\rho-\epsilon)\right]$.
\end{theorem}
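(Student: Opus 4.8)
The plan is to transcribe the proof of Theorem~\ref{thmsobo} to the semidiscrete setting, the only genuinely new requirement being that every constant obtained be independent of the truncation parameter $N$. The decisive structural observation is that $\mathcal{R}_{N}$ is nothing but the spectral truncation of $\mathcal{R}$: by the definitions of $A^{s}_{N}$ and $P_{N}$ one has $\mathcal{R}_{N}(t)v=\sum_{k=1}^{N}E_{k}(t)(v,\phi_{k})\phi_{k}$ with \emph{exactly the same} functions $E_{k}$ as in the continuous case. Consequently the resolvent bounds \eqref{equresolvent0} and \eqref{equresolvent} hold verbatim for $\mathcal{R}_{N}$ and for the $E_{k}$ with $k\leq N$, with constants free of $N$, and $\|P_{N}\|\leq 1$. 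Starting from \eqref{eqsemiresol} I would split $\mathbb{E}\|A^{\sigma}u_{N}\|^{2}_{\mathbb{H}}\leq \mathrm{I}+\mathrm{II}$ into the deterministic drift contribution and the stochastic convolution, just as in Theorem~\ref{thmsobo}.

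For the drift part $\mathrm{I}$, since $A^{\sigma}\mathcal{R}_{N}(t-r)P_{N}$ acts diagonally on $\{\phi_{k}\}_{k\leq N}$, its operator norm is controlled by $\sup_{k\leq N}\lambda_{k}^{\sigma}|E_{k}(t-r)|$, which \eqref{equresolvent} (with $\beta=\sigma/s$) together with a contour estimate over $\Gamma_{\theta,\kappa}$ bounds by $C(t-r)^{-\sigma\alpha/s}$, uniformly in $N$. Combining this with the growth assumption \eqref{eqassptf} and the Cauchy--Schwarz inequality yields
\begin{equation*}
	\mathrm{I}\leq C+C\int_{0}^{t}(t-r)^{-2\sigma\alpha/s+1-\epsilon}\mathbb{E}\|u_{N}(r)\|_{\mathbb{H}}^{2}dr,
\end{equation*}
which is meaningful provided $\sigma<s/\alpha$, the kernel then being integrable.

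For the stochastic part $\mathrm{II}$, the finite sum over $k\leq N$ is dominated by its extension to $k=\infty$, so no new convergence issue arises; applying Lemma~\ref{eqcorleml} to each one-dimensional integral against $W^{H}_{k}$ reduces $\mathrm{II}$ to $C\sum_{k=1}^{\infty}\lambda_{k}^{2\sigma}\Lambda_{k}\int_{0}^{t}|{}_{0}\partial_{r}^{\frac{1-2H}{2}}E_{k}(r)|^{2}dr$. Writing $\lambda_{k}^{2\sigma}\Lambda_{k}=\lambda_{k}^{-2\rho}\Lambda_{k}\cdot\lambda_{k}^{2(\sigma+\rho)}$ and invoking $\|A^{-\rho}Q^{1/2}\|_{\mathcal{L}_{2}}^{2}=\sum_{k}\lambda_{k}^{-2\rho}\Lambda_{k}<\infty$ lets me pull the series out and leaves $C\sup_{k}\int_{0}^{t}|\lambda_{k}^{\sigma+\rho}{}_{0}\partial_{r}^{\frac{1-2H}{2}}E_{k}(r)|^{2}dr$. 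Estimating this via \eqref{equresolvent} with $\beta=(\sigma+\rho)/s$ and deforming onto $\Gamma_{\theta,\kappa}$ produces $C\int_{0}^{t}r^{2H-1-2(\sigma+\rho)\alpha/s}dr$, which is finite exactly when $\sigma<\frac{sH}{\alpha}-\rho$.

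Finally I would combine the two bounds and close the argument with a (weakly singular, Henry-type) Gr\"onwall inequality applied to $\mathbb{E}\|A^{\sigma}u_{N}(t)\|_{\mathbb{H}}^{2}$ through the kernel $(t-r)^{-2\sigma\alpha/s+1-\epsilon}$ appearing in $\mathrm{I}$; the admissible range $\sigma\in[0,\min(s-\rho,\frac{sH}{\alpha}-\rho-\epsilon)]$ is precisely the intersection of the two constraints $\sigma<s/\alpha$ and $\sigma<sH/\alpha-\rho$. The only potential obstacle is the uniformity in $N$, but this is automatic here: the $E_{k}$ and the resolvent estimates are identical to those of the continuous problem, the projection $P_{N}$ is a contraction, and the truncated series in $\mathrm{II}$ is bounded by the full one, so the finite-dimensional reduction can only help rather than hurt.
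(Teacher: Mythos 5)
Your proposal is correct and follows exactly the route the paper intends: the paper gives no separate proof of this theorem, stating only that it is ``similar to the proof of Theorem~\ref{thmsobo},'' and your argument is precisely that proof transcribed to the semidiscrete setting, with the right supporting observations (the $E_{k}$ and resolvent bounds are unchanged, $P_{N}$ is a contraction, and the truncated series in the stochastic term is dominated by the full one) to guarantee the constants are independent of $N$.
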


\begin{theorem}\label{thmholderun}
	Let $u_{N}$ be the mild solution of Eq. $\eqref{eqsemischeme}$ and $\|A^{-\rho}Q^{1/2}\|_{\mathcal{L}_{2}}<\infty$ with $\rho\in [0,\frac{sH}{\alpha})\cap[0,s]$ and $\alpha\in(0,1)$. Then we have
	\begin{equation*}
		\mathbb{E}\left \|\frac{u_{N}(t)-u_{N}(t-\tau)}{\tau^{\gamma}}\right \|^{2}_{\mathbb{H}}\leq C,
	\end{equation*}
	where $\gamma\in[0,H-\frac{\rho\alpha}{s})$.
\end{theorem}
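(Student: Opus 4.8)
The plan is to follow the proof of Theorem~\ref{thmholder} essentially verbatim, the only genuinely new point being that every constant must be shown to be independent of $N$. Starting from the mild-solution representation~\eqref{eqsemiresol}, I would split $\mathbb{E}\|(u_N(t)-u_N(t-\tau))/\tau^\gamma\|_{\mathbb{H}}^2$ into a deterministic part $\mathrm{I}$ coming from $\int_0^t \mathcal{R}_N(t-r)P_N f(u_N(r))\,dr$ and a stochastic part $\mathrm{II}$ coming from the truncated series $\sum_{k=1}^N \int_0^t \sqrt{\Lambda_k}E_k(t-r)\phi_k\,dW_k^H(r)$.

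The first thing to record is that $\mathcal{R}_N$ inherits the resolvent bound~\eqref{equresolvent0} uniformly in $N$: since $\mathcal{R}_N(t)u=\sum_{k=1}^N E_k(t)(u,\phi_k)\phi_k$ activates only the eigenvalues $\lambda_1,\dots,\lambda_N$, the scalar estimate~\eqref{equresolvent} for the kernels $E_k$ yields $\|A_N^{\beta s}\tilde{\mathcal{R}}_N(z)\|\le C|z|^{\beta\alpha-1}$ with the \emph{same} constant. With this in hand, part $\mathrm{I}$ is treated exactly as in Theorem~\ref{thmholder}: split it into $\mathrm{I}_1$, the resolvent difference $\int_0^{t-\tau}(\mathcal{R}_N(t-r)-\mathcal{R}_N(t-\tau-r))P_N f(u_N(r))\,dr$, and the boundary-layer term $\mathrm{I}_2=\int_{t-\tau}^t \mathcal{R}_N(t-r)P_N f(u_N(r))\,dr$. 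Using $|(e^{z\tau}-1)/\tau^\gamma|\le C|z|^\gamma$, the growth bound~\eqref{eqassptf}, the fact that $P_N$ is a contraction on $\mathbb{H}$, and the boundedness $\mathbb{E}\|u_N\|_{\mathbb{H}}^2\le C$ supplied by Theorem~\ref{thmsoboun}, both pieces give $\mathrm{I}\le C$ for $\gamma\in[0,1)$.

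For the stochastic part $\mathrm{II}$ I would again split into $\mathrm{II}_1$, built from $E_k(t-r)-E_k(t-\tau-r)$ over $[0,t-\tau]$, and $\mathrm{II}_2$, the boundary-layer increment over $[t-\tau,t]$. Because these involve exactly the same scalar kernels $E_k$ as the continuous problem and the sum runs only over $k\le N$, truncation drops only nonnegative terms, so $\sum_{k=1}^N \Lambda_k\lambda_k^{-2\rho}|\cdot|^2\le\|A^{-\rho}Q^{1/2}\|_{\mathcal{L}_2}^2\sup_k|\cdot|^2$. Applying Lemma~\ref{eqcorleml} (through its three cases $H=\tfrac12$, $H\in(\tfrac12,1)$, $H\in(0,\tfrac12)$) together with the kernel estimate~\eqref{equresolvent} then reproduces the bounds $\int_0^t r^{2H-1-2\gamma-2\rho\alpha/s}\,dr$ for $\mathrm{II}_1$ and $\tau^{2H-2\rho\alpha/s}$ for $\mathrm{II}_2$. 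Both are finite precisely when $2H-1-2\gamma-2\rho\alpha/s>-1$, i.e. $\gamma<H-\rho\alpha/s$, which is the asserted range.

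The main point to watch is not any individual estimate but the uniformity in $N$: one must confirm that passing from $\mathcal{R}$ to $\mathcal{R}_N$ and from the full noise series to its $N$-term truncation leaves every constant untouched. Once the two observations above are in place---$\mathcal{R}_N$ obeys~\eqref{equresolvent0} with an $N$-independent constant, and the truncated Hilbert--Schmidt sum is dominated by the full one---the computation is literally that of Theorem~\ref{thmholder}, and no essentially new analysis is required.
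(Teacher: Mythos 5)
Your proposal is correct and follows exactly the route the paper intends: the paper gives no separate proof of Theorem~\ref{thmholderun}, stating only that it follows as in Theorems~\ref{thmsobo} and~\ref{thmholder}, and your argument is precisely that adaptation, with the right observations (the scalar kernels $E_k$ and their estimate \eqref{equresolvent} are unchanged, and the truncated Hilbert--Schmidt sum is dominated by the full one) to guarantee all constants are independent of $N$.
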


Now we provide the spatial error estimate.

\begin{theorem}\label{thmspter}
	Let $u$ and $u_{N}$ be the solutions of \eqref{equretosol} and \eqref{eqsemischeme}, respectively. Assuming $\|A^{-\rho}Q^{1/2}\|_{\mathcal{L}_{2}}<\infty$ with $\rho\in \left[0,\min(\frac{sH}{\alpha},s+\epsilon)\right)$ and $\alpha\in(0,1)$, we have
	\begin{equation*}
		(\mathbb{E}\|u-u_{N}\|_{\mathbb{H}}^{2})^{1/2}\leq C(N+1)^{-2\sigma/d},
	\end{equation*}
	where $\sigma\in\left(0,\min(s-\rho,\frac{sH}{\alpha}-\rho-\epsilon)\right]$ and $d$ is the dimension of $\Omega$.
\end{theorem}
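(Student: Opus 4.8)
The plan is to subtract the two mild-solution representations \eqref{eqrepsol} and \eqref{eqsemiresol}. Since both use the same scalar kernels $E_k$, the stochastic integrals cancel up to the spectral tail, and the error splits cleanly as
\begin{equation*}
u-u_N=\underbrace{\int_0^t\left(\mathcal{R}(t-r)f(u(r))-\mathcal{R}_N(t-r)P_Nf(u_N(r))\right)dr}_{\mathrm{drift}}+\underbrace{\sum_{k=N+1}^\infty\int_0^t\sqrt{\Lambda_k}E_k(t-r)\phi_k\,dW^H_k(r)}_{\mathrm{noise}}.
\end{equation*}
I would estimate the two parts separately and close the argument with Gr\"onwall's inequality.

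For the noise part I would mirror the computation in the proof of Theorem \ref{thmsobo}: applying Lemma \ref{eqcorleml} modewise yields
\begin{equation*}
\mathbb{E}\left\|\sum_{k=N+1}^\infty\int_0^t\sqrt{\Lambda_k}E_k(t-r)\phi_k\,dW^H_k(r)\right\|_{\mathbb{H}}^2\le C\sum_{k=N+1}^\infty\Lambda_k\int_0^t\left|{}_0\partial_r^{\frac{1-2H}{2}}E_k(r)\right|^2dr.
\end{equation*}
Inserting the factor $\lambda_k^{-2(\sigma+\rho)}\lambda_k^{2(\sigma+\rho)}$ and using the resolvent estimate \eqref{equresolvent} with $\beta=(\sigma+\rho)/s$ exactly as in Theorem \ref{thmsobo}, one bounds $\lambda_k^{2(\sigma+\rho)}\int_0^t|{}_0\partial_r^{\frac{1-2H}{2}}E_k(r)|^2dr$ by a constant whenever $\sigma+\rho\le s$ and $\sigma<\frac{sH}{\alpha}-\rho$. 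I would then pull out $\lambda_{N+1}^{-2\sigma}$ using the monotonicity of the eigenvalues and bound the remaining $\sum_{k=N+1}^\infty\lambda_k^{-2\rho}\Lambda_k$ by $\|A^{-\rho}Q^{1/2}\|_{\mathcal{L}_2}^2$. Lemma \ref{thmeigenvalue} converts $\lambda_{N+1}^{-2\sigma}$ into $C(N+1)^{-4\sigma/d}$.

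For the drift part I would split it as
\begin{equation*}
\int_0^t\left(\mathcal{R}(t-r)-\mathcal{R}_N(t-r)P_N\right)f(u(r))\,dr+\int_0^t\mathcal{R}_N(t-r)P_N\left(f(u(r))-f(u_N(r))\right)dr.
\end{equation*}
Since $(\mathcal{R}(t)-\mathcal{R}_N(t)P_N)v=\sum_{k=N+1}^\infty E_k(t)(v,\phi_k)\phi_k$, the estimate \eqref{equresolvent} with $\beta=\sigma/s$ together with the monotonicity of $\lambda_k$ gives $\|(\mathcal{R}(t)-\mathcal{R}_N(t)P_N)v\|_{\mathbb{H}}\le C\lambda_{N+1}^{-\sigma}t^{-\sigma\alpha/s}\|v\|_{\mathbb{H}}$; combining this with the linear growth of $f$ in \eqref{eqassptf}, the bound $\mathbb{E}\|u(r)\|_{\mathbb{H}}^2\le C$ (Theorem \ref{thmsobo} with $\sigma=0$), and Cauchy--Schwarz on the weakly singular kernel (integrable since $\sigma<s/\alpha$) makes this first term $O((N+1)^{-4\sigma/d})$ in mean square. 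For the second term I would use $\|\mathcal{R}_N(t)\|\le C$, $\|P_N\|\le1$, and the Lipschitz bound in \eqref{eqassptf} to obtain the Gr\"onwall-type contribution $C\int_0^t\mathbb{E}\|u(r)-u_N(r)\|_{\mathbb{H}}^2dr$.

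Collecting the estimates gives $\mathbb{E}\|u-u_N\|_{\mathbb{H}}^2\le C(N+1)^{-4\sigma/d}+C\int_0^t\mathbb{E}\|u(r)-u_N(r)\|_{\mathbb{H}}^2dr$, so Gr\"onwall's inequality and taking square roots deliver the claimed rate $(N+1)^{-2\sigma/d}$. I expect the noise part to be the main obstacle: one must extract precisely the factor $\lambda_{N+1}^{-2\sigma}$ while simultaneously keeping the time integral $\int_0^t r^{2H-1-2(\sigma+\rho)\alpha/s}dr$ finite and the spectral sum $\sum_k\lambda_k^{-2\rho}\Lambda_k$ summable. It is exactly this balance, namely the condition $\beta=(\sigma+\rho)/s\le1$ for the resolvent estimate together with the temporal integrability threshold, that forces the admissible range $\sigma\le\min\!\left(s-\rho,\frac{sH}{\alpha}-\rho-\epsilon\right)$.
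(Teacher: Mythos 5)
Your proposal is correct and follows essentially the same route as the paper: the same drift/noise decomposition, the modewise application of Lemma \ref{eqcorleml} combined with the resolvent estimate \eqref{equresolvent} at $\beta=(\sigma+\rho)/s$ to extract $\lambda_{N+1}^{-2\sigma}$ from the spectral tail, Lemma \ref{thmeigenvalue} to convert this to $(N+1)^{-4\sigma/d}$, and Gr\"onwall to close. The only (immaterial) difference is that you let the operator error $\mathcal{R}-\mathcal{R}_NP_N$ act on $f(u)$ and route the Lipschitz difference through $\mathcal{R}_NP_N$, whereas the paper does the reverse; both yield the same bounds under the same constraints $\sigma+\rho\le s$ and $\sigma<\frac{sH}{\alpha}-\rho$.
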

\begin{proof}
	By Eqs. \eqref{eqrepsol} and \eqref{eqsemiresol}, there is
	\begin{equation*}
		\begin{aligned}
			\mathbb{E}\|u-u_{N}\|_{\mathbb{H}}^{2}\leq &C\mathbb{E}\left \|\int_{0}^{t}\mathcal{R}(t-r)f(u(r))-\mathcal{R}_{N}(t-r)P_{N}f(u_{N}(r))dr\right \|_{\mathbb{H}}^{2}\\
			&+C\mathbb{E}\left \|\sum_{k=N+1}^{\infty}\int_{0}^{t}\sqrt{\Lambda_{k}}E_{k}(t-r)\phi_{k}dW^{H}_{k}(r)\right \|_{\mathbb{H}}^{2}\\
			\leq& \uppercase\expandafter{\romannumeral1}+\uppercase\expandafter{\romannumeral2}.
		\end{aligned}
	\end{equation*}

	For $\uppercase\expandafter{\romannumeral1}$, we split it into two parts
	\begin{equation*}
		\begin{aligned}
			\uppercase\expandafter{\romannumeral1}\leq &C\mathbb{E}\left \|\int_{0}^{t}\mathcal{R}(t-r)(f(u(r))-f(u_{N}(r)))dr\right \|_{\mathbb{H}}^{2}\\
			&+C\mathbb{E}\left \|\int_{0}^{t}(\mathcal{R}(t-r)-\mathcal{R}_{N}(t-r)P_{N})f(u_{N}(r))dr\right \|_{\mathbb{H}}^{2}\\
			\leq& \uppercase\expandafter{\romannumeral1}_{1}+\uppercase\expandafter{\romannumeral1}_{2}.
		\end{aligned}
	\end{equation*}
	The resolvent estimate \eqref{equresolvent0}  gives
	\begin{equation*}
		\begin{aligned}
			\uppercase\expandafter{\romannumeral1}_{1}\leq& C\mathbb{E}\left( \int_{0}^{t}\|f(u(r))-f(u_{N}(r))\|_{\mathbb{H}}dr \right)^{2}\\
			\leq& C \int_{0}^{t}\mathbb{E}\|u(r)-u_{N}(r)\|_{\mathbb{H}}^{2}dr.
		\end{aligned}
	\end{equation*}
	By the definitions of $\mathcal{R}_{N}$, $P_{N}$, and Theorem \ref{thmsoboun}, one has
	\begin{equation*}
		\begin{aligned}
			\uppercase\expandafter{\romannumeral1}_{2}\leq& C\mathbb{E}\sum_{k=N+1}^{\infty}\left (\int_{0}^{t}E_{k}(t-r)(f(u_{N}(r)),\phi_{k})dr\right )^{2}\\
			\leq& C\mathbb{E}\sum_{k=N+1}^{\infty}\lambda_{k}^{-2\sigma}\left (\int_{0}^{t}E_{k}(t-r)\lambda_{k}^{\sigma}(f(u_{N}(r)),\phi_{k})dr\right )^{2}\\
			\leq&C\sup_{k\geq N+1} \lambda_{k}^{-2\sigma}\int_{0}^{t}(t-r)^{1-\epsilon}\left (\lambda_{k}^{\sigma}E_{k}(t-r)\right )^{2}dr\\
			\leq&C\sup_{k\geq N+1} \lambda_{k}^{-2\sigma},
		\end{aligned}
	\end{equation*}
	where $\sigma< \frac{s}{\alpha}$. As for $\uppercase\expandafter{\romannumeral2}$, one can get
	\begin{equation*}
		\begin{aligned}
			\uppercase\expandafter{\romannumeral2}
			\leq& C\sum_{k=N+1}^{\infty}\int_{0}^{t}\left |\sqrt{\Lambda_{k}}{}_{0}\partial^{\frac{1-2H}{2}}_{r}E_{k}(r)\right |^{2}dr\\
			\leq& C\sup_{k\geq N+1}\int_{0}^{t}\left |\lambda_{k}^{\rho}{}_{0}\partial^{\frac{1-2H}{2}}_{r}E_{k}(r)\right |^{2}dr,
		\end{aligned}
	\end{equation*}
	where we use Lemma \ref{eqcorleml} and $\|A^{-\rho}Q^{1/2}\|_{\mathcal{L}_{2}}<\infty$. Then Eq. \eqref{equresolvent} gives
	\begin{equation*}
		\begin{aligned}
			&\int_{0}^{t}\left |\lambda_{k}^{\rho}{}_{0}\partial^{\frac{1-2H}{2}}_{r}E_{k}(r)\right |^{2}dr\\
			\leq&C\int_{0}^{t}\left |\int_{\Gamma_{\theta, \kappa}}e^{zr}\lambda_{k}^{\rho}z^{\frac{1-2H}{2}}\tilde{E}_{k}(z)dz\right |^{2}dr\\
			\leq&C\lambda_{k}^{-2\sigma}\int_{0}^{t}\left (\int_{\Gamma_{\theta, \kappa}}|e^{zr}||z|^{\frac{1-2H}{2}}|\lambda_{k}^{\rho+\sigma}\tilde{E}_{k}(z)||dz|\right )^{2}dr\\
			\leq &C\lambda_{k}^{-2\sigma}\int_{0}^{t}\left (\int_{\Gamma_{\theta,\kappa}}|e^{zr}||z|^{\frac{1-2H}{2}+(\sigma+\rho)\alpha/s-1}|dz|\right )^{2}dr\\
			\leq& C\lambda_{k}^{-2\sigma}\int_{0}^{t} r^{2H-1-2(\sigma+\rho)\alpha/s}dr,
		\end{aligned}
	\end{equation*}
	where we need to require $2H-1-2(\sigma+\rho)\alpha/s>-1$, i.e., $\sigma<\frac{H s}{\alpha}-\rho$. Thus, by the Gr\"onwall inequality and Lemma \ref{thmeigenvalue}, the desired result is obtained.
\end{proof}

\section{Time discretization and error analysis}
Here backward Euler convolution quadrature \cite{Lubich.1988CqadocIb,Lubich.1988CqadocI,Lubich.1996Ndeefaoaeewaptmt} is used to discretize the Riemann-Liouville fractional derivative and the corresponding error estimate is also provided.
Let the time step size $\tau=T/L$ with $L\in\mathbb{N}^{*}$, $t_i=i\tau$, $i=0,1,\ldots,L$, and $0=t_0<t_1<\cdots<t_L=T$. Introduce  the operator $\bar{\partial}_{\tau}$ as
\begin{equation}
	\bar{\partial}_{\tau} u(t)=\left\{
	\begin{aligned}
		&0\qquad t=t_{0},\\
		&\frac{u(t_{j})-u(t_{j-1})}{\tau}\qquad t\in(t_{j-1},t_{j}].
	\end{aligned} \right.
\end{equation}

Using backward Euler method to discretize the corresponding temporal operator, the fully-discrete scheme of \eqref{equretosol} can be written as
\begin{equation}\label{eqfullscheme}
	\frac{u^{n}_{N}-u^{n-1}_{N}}{\tau}+\sum_{i=0}^{n-1}d^{(1-\alpha)}_{i}A^{s}_{N}u^{n-i}_{N}=P_{N} f(u_{N}^{n-1})+P_{N}\bar{\partial}_{\tau} W^{H}_{Q}(t_{n}),
\end{equation}
where
\begin{equation*}
	(\delta_{\tau}(\zeta))^{\alpha}=\sum_{i=0}^{\infty}d^{(\alpha)}_{i}\zeta^{i},\quad \delta_{\tau}(\zeta)=\frac{1-\zeta}{\tau}.
\end{equation*}
Denote $\bar{F}(t)$ as
\begin{equation*}
	\bar{F}(t)=\left\{
	\begin{aligned}
		&0\qquad\qquad~~~ t=t_{0},\\
		&f(u_{N}^{j-1})\qquad t\in(t_{j-1},t_{j}],\\
	\end{aligned} \right.
\end{equation*}
and $F(t)=f(u_{N}(t))$. In the following, we abbreviate $P_{N}F(t)$ and $P_{N}\bar{F}(t)$ as $F_{N}$ and $\bar{F}_{N}$.
With the help of the facts \cite{Gunzburger.2018ScrotdfstfPstastwn}
\begin{equation*}
	\sum_{n=1}^{\infty}\bar{\partial}_{\tau} W^{H}_{Q}(t_{n})e^{-zt_{n}}=\frac{z}{e^{z\tau}-1} \widetilde{\bar{\partial}_{\tau}W^{H}_{Q}},\quad \sum_{n=1}^{\infty}\bar{F}_{N}(t_{n})e^{-zt_{n}}=\frac{z}{e^{z\tau}-1}\tilde{\bar{F}}_{N}(z)
\end{equation*}
and doing simple calculations, we can get
\begin{equation}\label{eqfullschsolrep}
	u^{n}_{N}=\int_{0}^{t_{n}}\bar{\mathcal{R}}_{N}(t_{n}-r)\bar{F}_{N}(r)dr+\sum_{k=1}^{N}\int_{0}^{t_{n}}\sqrt{\Lambda_{k}}\bar{E}_{k}(t_{n}-r)\phi_{k} \bar{\partial}_{\tau}W^{H}_{k}(r)dr,
\end{equation}
where
\begin{equation*}
	\bar{\mathcal{R}}_{N}(t)=\frac{1}{2\pi\mathbf{i}}\int_{\Gamma^{\tau}_{\theta,\kappa}}e^{zt}(\delta_{\tau}(e^{-z\tau}))^{\alpha-1}((\delta_{\tau}(e^{-z\tau}))^{\alpha}+A_{N}^{s})^{-1}\frac{z\tau}{e^{z\tau}-1} dz
\end{equation*}
and
\begin{equation*}
	\bar{E}_{k}(t)=\frac{1}{2\pi\mathbf{i}}\int_{\Gamma^{\tau}_{\theta,\kappa}}e^{zt}(\delta_{\tau}(e^{-z\tau}))^{\alpha-1}((\delta_{\tau}(e^{-z\tau}))^{\alpha}+\lambda_{k}^{s})^{-1}\frac{z\tau}{e^{z\tau}-1} dz.
\end{equation*}
Here $\Gamma^{\tau}_{\theta,\kappa}=\{z\in \mathbb{C}:\kappa\leq|z|\leq  \frac{\pi}{\tau \sin(\theta)},|\arg z|=\theta\}\cup\{z\in \mathbb{C}:|z|=\kappa,|\arg z|\leq \theta\}$.

To estimate $\mathbb{E}\|u_{N}(t_{n})-u^{n}_{N}\|_{\mathbb{H}}^{2}$, the following lemma is needed.
\begin{lemma}[\cite{Gunzburger.2018ScrotdfstfPstastwn}]\label{Lemseriesest}
	Let the given $\alpha\in(0,1)$ and $\theta \in\left(\frac{\pi}{2}, \operatorname{arccot}\left(-\frac{2}{\pi}\right)\right)$,  where $arccot$ means the inverse function of $\cot$, and a fixed $\xi \in (0,1)$. Then, when $z$ lies in the region enclosed by $\Gamma^\tau_\xi=\{z=-\ln{(\xi)}/\tau+\mathbf{i}y:y\in\mathbb{R}~and~|y|\leq \pi/\tau\}$, $\Gamma^\tau_{\theta,\kappa}$, and the two lines $\mathbb{R}\pm \mathbf{i}\pi/\tau$, whenever $0<\kappa \leq \min (1 / T,-\ln (\xi) / \tau)$, $\delta_\tau(e^{-z\tau})$ and $(\delta_\tau(e^{-z\tau})+A)^{-1}$ are both analytic. Furthermore, we have
	\begin{equation*}
		\begin{aligned}
			&\delta_{\tau}\left(e^{-z \tau}\right) \in \Sigma_{\theta}&\forall z \in \Gamma_{\theta, \kappa}^{\tau},\\
			&C_{0}|z| \leq\left|\delta_{\tau}\left(e^{-z\tau }\right)\right| \leq C_{1}|z|&\forall z \in \Gamma_{\theta, \kappa}^{\tau},\\
			&\left|\delta_{\tau}\left(e^{-z\tau }\right)-z\right| \leq C \tau|z|^{2}&\forall z \in \Gamma_{\theta, \kappa}^{\tau},\\
			&\left|\delta_{\tau}\left(e^{-z\tau }\right)^{\alpha}-z^{\alpha}\right| \leq C \tau|z|^{\alpha+1}&\forall z \in \Gamma_{\theta, \kappa}^{\tau},
		\end{aligned}
	\end{equation*}
	where $\kappa\in (0,\min (1 / T,-\ln (\xi) / \tau))$ and the constants $C_0$, $C_1$, and $C$ are independent of $\tau$.
	
\end{lemma}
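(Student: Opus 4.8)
The plan is to substitute $w=z\tau$ and reduce every claim to properties of the single entire function $g(w)=(1-e^{-w})/w$, using the identity $\delta_\tau(e^{-z\tau})=(1-e^{-z\tau})/\tau=z\,g(z\tau)$. Since $e^{-z\tau}$ is entire in $z$, the map $z\mapsto\delta_\tau(e^{-z\tau})$ is automatically analytic on all of $\mathbb{C}$; the only genuine analyticity issue concerns the fractional power $(\delta_\tau(e^{-z\tau}))^{\alpha}$ and the resolvent $(\delta_\tau(e^{-z\tau})+A)^{-1}$, both of which are analytic precisely when $\delta_\tau(e^{-z\tau})$ stays inside the sector $\Sigma_{\theta}$, away from the branch cut on the negative real axis and from the spectrum of $-A$. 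Thus the heart of the lemma is the sector containment, which I treat first.

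For the sector property, note that on $\Gamma^{\tau}_{\theta,\kappa}$ one has $|z|\le \pi/(\tau\sin\theta)$, so $|w|=|z|\tau\le \pi/\sin\theta$ is bounded and $|\mathrm{Im}(w)|\le\pi$; in particular $e^{-w}=1$ only at $w=0$, so $g(w)\neq 0$ on the relevant bounded region. Writing $\arg(\delta_\tau(e^{-z\tau}))=\arg w+\arg g(w)$ and using $\arg w=\pm\theta$ on the two rays, the claim $\delta_\tau(e^{-z\tau})\in\Sigma_{\theta}$ reduces to showing that $\arg g(w)$ carries the sign opposite to $\arg w$ and stays small enough that the total argument does not exceed $\theta$. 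A direct estimate of $\arg((1-e^{-w})/w)$ along $|\arg w|=\theta$ with $|\mathrm{Im}(w)|\le\pi$ is exactly where the admissibility condition $\theta<\operatorname{arccot}(-2/\pi)$ enters: it is the sharp angle that keeps the image of the truncated sector inside $\Sigma_{\theta}$. Analyticity throughout the \emph{enclosed} region (not merely on the contour) then follows because $\delta_\tau(e^{-z\tau})$ maps the whole region into $\Sigma_{\theta}$, which one verifies by the same boundary analysis together with an argument/maximum principle applied on the strip $|\mathrm{Im}(w)|\le\pi$.

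The remaining quantitative bounds are comparatively routine. The two-sided estimate $C_{0}|z|\le|\delta_\tau(e^{-z\tau})|\le C_{1}|z|$ follows from $|\delta_\tau(e^{-z\tau})|=|z|\,|g(z\tau)|$ together with the fact that $|g(w)|$ is bounded above and below by positive constants on the compact set $\{w:|w|\le\pi/\sin\theta,\ |\mathrm{Im}(w)|\le\pi\}$, since there $g$ is continuous, nonvanishing, and $g(w)\to1$ as $w\to0$. For the consistency estimates I would Taylor expand: $\delta_\tau(e^{-z\tau})-z=\tau^{-1}(1-e^{-z\tau}-z\tau)$, and since $1-e^{-w}-w=-w^{2}/2+O(w^{3})$ for bounded $|w|$, this gives $|\delta_\tau(e^{-z\tau})-z|\le C\tau|z|^{2}$. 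The fractional version then uses the integral representation
\begin{equation*}
	(\delta_\tau(e^{-z\tau}))^{\alpha}-z^{\alpha}=\alpha\,\big(\delta_\tau(e^{-z\tau})-z\big)\int_{0}^{1}\big(z+s(\delta_\tau(e^{-z\tau})-z)\big)^{\alpha-1}\,ds;
\end{equation*}
since both endpoints, and hence the whole segment, lie in $\Sigma_{\theta}$ with modulus comparable to $|z|$ (by the two-sided bound), the integrand is $O(|z|^{\alpha-1})$, and multiplying by the already-established factor $O(\tau|z|^{2})$ yields $|(\delta_\tau(e^{-z\tau}))^{\alpha}-z^{\alpha}|\le C\tau|z|^{\alpha+1}$.

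The main obstacle is the sector-containment step: controlling $\arg((1-e^{-w})/w)$ uniformly along the boundary rays of the truncated sector and pinning down the exact admissible angle $\operatorname{arccot}(-2/\pi)$. Everything else—the norm equivalence and the two consistency estimates—is a direct consequence of elementary Taylor expansions once the boundedness and nonvanishing of $g$ on the relevant compact region are in hand.
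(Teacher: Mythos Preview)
The paper does not supply its own proof of this lemma: it is stated with a citation to \cite{Gunzburger.2018ScrotdfstfPstastwn} and used as a black box, so there is no in-paper argument to compare against. Your outline is the standard route taken in that reference and in the surrounding literature on convolution quadrature (e.g.\ Jin--Lazarov--Zhou): rescale via $w=z\tau$, factor $\delta_\tau(e^{-z\tau})=z\,g(z\tau)$ with $g(w)=(1-e^{-w})/w$, obtain the two-sided modulus bound from continuity and nonvanishing of $g$ on the compact image set, and derive the consistency estimates from the Taylor expansion of $1-e^{-w}$ together with the mean-value representation for the fractional power. Your identification of the sector-containment step as the only nontrivial part, and of the angle condition $\theta<\operatorname{arccot}(-2/\pi)$ as what makes it go through, is correct; that is exactly the computation carried out in the cited source. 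The sketch is sound, with the caveat that the actual verification of $\arg g(w)$ along the truncated rays---the part you describe as ``a direct estimate''---does require some explicit trigonometric work that you have not written out.
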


Introduce $G_{k}$ as
\begin{equation}\label{eqdefFk}
	G_{k}(t_{i-1})=\frac{1}{\tau}\int_{t_{i-1}}^{t_{i}}\bar{E}_{k}(r)dr\qquad t\in[t_{i-1},t_{i}),
\end{equation}
and denote $G_{k}(t_{i})$ as $G_{k,i}$.
According to the definition of $\bar{E}_{k}(r)$, we have
\begin{equation*}
	\begin{aligned}
		G_{k,i-1}=&\frac{1}{\tau}\int_{t_{i-1}}^{t_{i}}\frac{1}{2\pi \mathbf{i}}\int_{\Gamma_{\theta, \kappa}^{\tau}}e^{zr}\tilde{\bar{E}}_{k}(z)dzdr\\
		=&\frac{1}{2\pi \mathbf{i}}\int_{\Gamma_{\theta, \kappa}^{\tau}}\frac{e^{zt_{i}}-e^{zt_{i-1}}}{\tau z}\tilde{\bar{E}}_{k}(z)dz\\
		=&\frac{1}{2\pi\mathbf{i}}\int_{\Gamma^{\tau}_{\theta,\kappa}}e^{zt_{i-1}}(\delta_{\tau}(e^{-z\tau}))^{\alpha-1}((\delta_{\tau}(e^{-z\tau}))^{\alpha}+\lambda_{k}^{s})^{-1} dz.
	\end{aligned}
\end{equation*}
Simple calculations lead to
\begin{equation*}
	\sum_{i=0}^{\infty}G_{k,i}\zeta^{i}=\frac{1}{\tau}(\delta_{\tau}(\zeta))^{\alpha-1}((\delta_{\tau}(\zeta))^{\alpha}+\lambda_{k}^{s})^{-1}.
\end{equation*}
Thus the Laplace transform of $G_{k}$ can be written as
\begin{equation}
	\begin{aligned}
		\tilde{G}_{k}(z)=&\int_{0}^{\infty}e^{-zt}G_{k}(t)dt=\sum_{i=0}^{\infty}G_{k,i}\int_{t_{i}}^{t_{i+1}}e^{-zt}dt\\
		&\quad=\sum_{i=0}^{\infty}G_{k,i}e^{-zt_{i}}\frac{1-e^{-z\tau}}{z}
		=\frac{1}{z}(\delta_{\tau}(e^{-z\tau}))^{\alpha}((\delta_{\tau}(e^{-z\tau}))^{\alpha}+\lambda_{k}^{s})^{-1}.
	\end{aligned}
\end{equation}
Then we provide an estimate of $\tilde{G}_{k}(z)$.
\begin{lemma}\label{LemEsF}
	Let $G_{k}$ be defined in \eqref{eqdefFk}. For $\tau<\tau^{*}$ $($the value of $\tau^{*}$ depends on $\lambda_{k}$$)$, one has
	\begin{equation*}
		|\lambda_{k}^{\beta s}\tilde{G}_{k}(z)|\leq \left\{
		\begin{aligned}
			&	C|z|^{\beta\alpha-1}e^{\beta\alpha|z|\tau}\quad z\in\Gamma_{\theta, \kappa}\backslash\Gamma_{\theta,\kappa}^{\tau},\\
			&	C|z|^{\beta\alpha-1}\qquad\qquad z\in\Gamma_{\theta,\kappa}^{\tau},
		\end{aligned}		
		\right.
	\end{equation*}
	where $\beta\in[0,1]$, $s\in(0,1)$, and $\alpha\in(0,1)$.
\end{lemma}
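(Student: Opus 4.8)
The plan is to work directly from the closed form $\tilde{G}_{k}(z)=\frac{1}{z}(\delta_{\tau}(e^{-z\tau}))^{\alpha}((\delta_{\tau}(e^{-z\tau}))^{\alpha}+\lambda_{k}^{s})^{-1}$ established just above the statement. Writing $w=\delta_{\tau}(e^{-z\tau})$, the quantity to estimate is $|\lambda_{k}^{\beta s}\tilde{G}_{k}(z)|=\frac{1}{|z|}\lambda_{k}^{\beta s}|w|^{\alpha}|w^{\alpha}+\lambda_{k}^{s}|^{-1}$, so the whole problem reduces to controlling the factor $\lambda_{k}^{\beta s}|w|^{\alpha}/|w^{\alpha}+\lambda_{k}^{s}|$. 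The elementary building block I would isolate first is the weighted arithmetic--geometric mean inequality: for $\beta\in[0,1]$ and $x,y>0$ one has $x^{1-\beta}y^{\beta}\leq(1-\beta)x+\beta y\leq x+y$, which with $x=|w|^{\alpha}$ and $y=\lambda_{k}^{s}$ gives $\lambda_{k}^{\beta s}|w|^{\alpha}\leq|w|^{\beta\alpha}(|w|^{\alpha}+\lambda_{k}^{s})$. Hence, whenever $|w^{\alpha}+\lambda_{k}^{s}|$ is comparable from below to $|w|^{\alpha}+\lambda_{k}^{s}$, the factor is bounded by $C|w|^{\beta\alpha}$, and the two cases of the lemma will follow by bounding $|w|$ against $|z|$ on the two pieces of the contour.

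On $\Gamma_{\theta,\kappa}^{\tau}$, Lemma \ref{Lemseriesest} supplies exactly what is needed: $w\in\Sigma_{\theta}$ together with $C_{0}|z|\leq|w|\leq C_{1}|z|$. Since $\alpha\theta<\theta<\pi$, the power $w^{\alpha}$ lies in the sector $\Sigma_{\alpha\theta}$, which stays away from the negative real axis; combined with $\lambda_{k}^{s}>0$ this yields the standard lower bound $|w^{\alpha}+\lambda_{k}^{s}|\geq c_{\theta}(|w|^{\alpha}+\lambda_{k}^{s})$, whose proof is the short computation $|\zeta+\mu|^{2}=|\zeta|^{2}+2\mu|\zeta|\cos(\arg\zeta)+\mu^{2}\geq|\zeta|^{2}+2\mu|\zeta|\cos(\alpha\theta)+\mu^{2}\geq c_{\theta}^{2}(|\zeta|+\mu)^{2}$ for $\zeta\in\Sigma_{\alpha\theta}$ and $\mu>0$, the last inequality holding because $t\mapsto(t^{2}+2t\cos(\alpha\theta)+1)/(t+1)^{2}$ has a positive minimum when $\alpha\theta<\pi$. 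Inserting this into the AM--GM bound and using $|w|\leq C_{1}|z|$ produces $|\lambda_{k}^{\beta s}\tilde{G}_{k}(z)|\leq C|z|^{\beta\alpha-1}$, the second case.

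The harder region is $\Gamma_{\theta,\kappa}\backslash\Gamma_{\theta,\kappa}^{\tau}$, the outer rays $z=re^{\pm\mathbf{i}\theta}$ with $r>\pi/(\tau\sin\theta)$, where Lemma \ref{Lemseriesest} no longer applies and $w^{\alpha}+\lambda_{k}^{s}$ can come arbitrarily close to zero; this is the main obstacle, and it is the reason the exponential factor and the $\lambda_{k}$-dependent threshold $\tau^{*}$ appear. The idea is to trade the lost sector control for a genuine lower bound on $|w|$. Since $\cos\theta<0$, on these rays $|1-e^{-z\tau}|\geq e^{r\tau|\cos\theta|}-1\geq e^{\pi|\cot\theta|}-1>0$, a constant independent of $\tau$, so $|w|\geq c_{1}/\tau$. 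Choosing $\tau<\tau^{*}$ defined by $c_{1}^{\alpha}(\tau^{*})^{-\alpha}=2\lambda_{k}^{s}$ (hence $\tau^{*}$ depending on $\lambda_{k}$) forces $|w|^{\alpha}\geq2\lambda_{k}^{s}$, whence both $|w^{\alpha}+\lambda_{k}^{s}|\geq\tfrac12|w|^{\alpha}$ and $\lambda_{k}^{\beta s}\leq2^{-\beta}|w|^{\beta\alpha}$, giving $\lambda_{k}^{\beta s}|w|^{\alpha}/|w^{\alpha}+\lambda_{k}^{s}|\leq2|w|^{\beta\alpha}$.

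To finish this case I would feed in the crude upper bound $|w|\leq(1+e^{r\tau|\cos\theta|})/\tau\leq 2e^{|z|\tau}/\tau$ (using $|\cos\theta|\leq1$) together with the observation that $1/\tau<|z|$ everywhere on this region, since $|z|>\pi/(\tau\sin\theta)>1/\tau$. This converts the unwanted $\tau^{-\beta\alpha}$ into $|z|^{\beta\alpha}$ while leaving the exponential factor $e^{\beta\alpha|z|\tau}$, so that $2|w|^{\beta\alpha}\leq C|z|^{\beta\alpha}e^{\beta\alpha|z|\tau}$. Restoring the $1/|z|$ prefactor then gives $|\lambda_{k}^{\beta s}\tilde{G}_{k}(z)|\leq C|z|^{\beta\alpha-1}e^{\beta\alpha|z|\tau}$, the first case, completing the proof.
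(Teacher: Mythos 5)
Your argument is correct and follows essentially the same route as the paper: on $\Gamma_{\theta,\kappa}^{\tau}$ you exploit $\delta_\tau(e^{-z\tau})\in\Sigma_\theta$ and $|\delta_\tau(e^{-z\tau})|\sim|z|$ from Lemma \ref{Lemseriesest} (your AM--GM plus sectorial lower bound is just an explicit re-derivation of the resolvent estimate \eqref{equresolvent} the paper invokes), and on the outer rays you use the same lower bound $|\delta_\tau(e^{-z\tau})|\gtrsim 1/\tau$, the same $\lambda_k$-dependent smallness condition $\tau^{-\alpha}\gtrsim\lambda_k^{s}$ to get $|\lambda_k^{\beta s}(w^{\alpha}+\lambda_k^{s})^{-1}|\leq C|w|^{(\beta-1)\alpha}$, and the same conversion $|\delta_\tau(e^{-z\tau})|\leq C|z|e^{|z|\tau}$. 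The only differences are cosmetic (you derive the exponential upper bound directly rather than citing it, and your constant $e^{\pi|\cot\theta|}-1$ replaces the paper's $e-1$).
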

\begin{proof}
	When $z\in\Gamma_{\theta, \kappa}^{\tau}$, the desired estimate can be got by Eq. \eqref{equresolvent} and Lemma \ref{Lemseriesest}.
	
	As for $z=|z| e^{\theta\mathbf{i}}\in\Gamma_{\theta,\kappa}\backslash\Gamma_{\theta,\kappa}^{\tau}$, we consider $\tau\delta_{\tau}(e^{-z\tau})$ first. Simple calculations give
	\begin{equation*}
		\begin{aligned}
			|\tau\delta_{\tau}(e^{-z\tau})|=&|1-e^{-z\tau}|\\
			=&\left |1-e^{-|z|\cos(\theta)\tau}e^{-\mathbf{i}|z|\sin(\theta)\tau}\right |.
		\end{aligned}
	\end{equation*}
	Since $z\in \Gamma_{\theta,\kappa}\backslash\Gamma_{\theta,\kappa}^{\tau}$, one has $|z|\geq \frac{\pi}{\tau\sin(\theta)}$. Choosing a suitable $\theta\in(\frac{\pi}{2},\pi)$ satisfying $\cot(\theta)\pi\leq-1$, we have
	\begin{equation*}
		|\tau\delta_{\tau}(e^{-z\tau})|\geq e-1,
	\end{equation*}
	which yields
	\begin{equation*}
		|\delta_{\tau}(e^{-z\tau})|\geq \frac{e-1}{\tau}.
	\end{equation*}
	Let $\tau$ be small enough to satisfy $(\frac{e-1}{\tau})^{\alpha}>2\lambda_{k}^{s}$. Then it has
	\begin{equation*}
		\begin{aligned}
			\left |\lambda_{k}^{\beta s}\frac{1}{(\delta_{\tau}(e^{-z\tau}))^{\alpha}+\lambda^{s}_{k}}\right |\leq C|\delta_{\tau}(e^{-z\tau})|^{(\beta-1)\alpha}.
		\end{aligned}
	\end{equation*}
	Combining the definition of $G_{k}$, one can get
	\begin{equation*}
		|\tilde{G}_{k}|\leq C|z|^{-1}|\delta_{\tau}(e^{-z\tau})|^{\beta\alpha}.
	\end{equation*}
	According to the fact $\delta_{\tau}(e^{-z\tau})\leq |z|\sum_{k=1}^{\infty}\frac{|z\tau|^{k-1}}{k!}\leq |z|e^{|z|\tau}$, $\forall z\in\Gamma_{\theta,\kappa}$ \cite{Jin.2020Iisos}, the desired result is reached.
\end{proof}

In the rest of paper, we take $\kappa\leq\frac{\pi}{t_{n}|\sin(\theta)|}$.
Then we provide the temporal error estimate.

\begin{theorem}\label{thmtimeer}
	Let $u_{N}(t_{n})$ and $u_{N}^{n}$ be the solutions of Eqs. \eqref{eqsemischeme} and \eqref{eqfullscheme}, respectively, and $\|A^{-\rho}Q^{1/2}\|_{\mathcal{L}_{2}}<\infty$ with $\rho\in \left[0,\frac{sH}{\alpha}\right)\cap[0,s]$ and $\alpha\in(0,1)$. Then there holds
	\begin{equation*}
		\left (\mathbb{E}\|u_{N}(t_{n})-u_{N}^{n}\|_{\mathbb{H}}^{2}\right )^{1/2}\leq C\tau^{H-\frac{\rho\alpha}{s}-\epsilon}.
	\end{equation*}
\end{theorem}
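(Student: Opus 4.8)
The plan is to use the mild-solution representations \eqref{eqsemiresol} and \eqref{eqfullschsolrep} to split the error into a nonlinear drift part and a stochastic part,
\begin{equation*}
	\mathbb{E}\|u_{N}(t_{n})-u_{N}^{n}\|_{\mathbb{H}}^{2}\leq \mathrm{I}+\mathrm{II},
\end{equation*}
where $\mathrm{I}$ collects the terms carrying $f$ and $\mathrm{II}$ collects the stochastic integrals. First I would treat $\mathrm{II}$, which the introduction flags as the genuinely new difficulty. The key preliminary step is to recognize that the discrete noise term in \eqref{eqfullschsolrep} is a true It\^o integral: using $\bar{\partial}_{\tau}W_{k}^{H}(r)=\tau^{-1}(W_{k}^{H}(t_{j})-W_{k}^{H}(t_{j-1}))$ on $(t_{j-1},t_{j}]$ together with the definition \eqref{eqdefFk} of $G_{k}$, one checks by the substitution $s=t_n-r$ that
\begin{equation*}
	\int_{0}^{t_{n}}\sqrt{\Lambda_{k}}\bar{E}_{k}(t_{n}-r)\phi_{k}\bar{\partial}_{\tau}W_{k}^{H}(r)dr=\int_{0}^{t_{n}}\sqrt{\Lambda_{k}}G_{k}(t_{n}-r)\phi_{k}dW_{k}^{H}(r).
\end{equation*}
Consequently $\mathrm{II}=\mathbb{E}\|\sum_{k=1}^{N}\int_{0}^{t_{n}}\sqrt{\Lambda_{k}}(E_{k}-G_{k})(t_{n}-r)\phi_{k}dW_{k}^{H}(r)\|_{\mathbb{H}}^{2}$, and the orthonormality of $\{\phi_{k}\}$ together with Lemma \ref{eqcorleml} and $\|A^{-\rho}Q^{1/2}\|_{\mathcal{L}_{2}}<\infty$ reduces it to $\mathrm{II}\leq C\sup_{k}\int_{0}^{t_{n}}|\lambda_{k}^{\rho}{}_{0}\partial_{r}^{\frac{1-2H}{2}}(E_{k}(r)-G_{k}(r))|^{2}dr$.

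The core estimate is thus to bound $\|\lambda_{k}^{\rho}{}_{0}\partial_{r}^{\frac{1-2H}{2}}(E_{k}-G_{k})\|_{L^{2}([0,t_{n}])}$. Mirroring the regularity proofs, I would pass to the Laplace side, where ${}_{0}\partial_{r}^{\frac{1-2H}{2}}$ becomes multiplication by $z^{\frac{1-2H}{2}}$ and $E_{k}-G_{k}$ is recovered by a contour integral over $\Gamma_{\theta,\kappa}$ of $\tilde{E}_{k}(z)-\tilde{G}_{k}(z)$. I would split the contour into $\Gamma_{\theta,\kappa}^{\tau}$ and $\Gamma_{\theta,\kappa}\setminus\Gamma_{\theta,\kappa}^{\tau}$. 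On $\Gamma_{\theta,\kappa}^{\tau}$ the crucial gain of a factor $\tau$ comes from the comparison bound $|\delta_{\tau}(e^{-z\tau})^{\alpha}-z^{\alpha}|\leq C\tau|z|^{\alpha+1}$ of Lemma \ref{Lemseriesest}, applied to the resolvent-type difference; on the outer part I would use the growth bound of Lemma \ref{LemEsF}, where the exponential factor $e^{\beta\alpha|z|\tau}$ must be absorbed using $|z|\tau\leq C$ in that region. Integrating the resulting $|z|$-powers along the contour and then in $r$, with the balance exponent $2H-1-2\rho\alpha/s>-1$ exactly as in Theorem \ref{thmholderun} and the choice $\kappa\sim 1/t_{n}$, yields $\mathrm{II}\leq C\tau^{2(H-\frac{\rho\alpha}{s}-\epsilon)}$.

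For the nonlinear part $\mathrm{I}$, I would introduce the piecewise-constant-in-time semidiscrete nonlinearity $\hat{F}_{N}(r)=P_{N}f(u_{N}(t_{j-1}))$ on $(t_{j-1},t_{j}]$ and split $\mathrm{I}$ into the deterministic convolution-quadrature error $\mathbb{E}\|\int_{0}^{t_{n}}(\mathcal{R}_{N}(t_{n}-r)F_{N}(r)-\bar{\mathcal{R}}_{N}(t_{n}-r)\hat{F}_{N}(r))dr\|_{\mathbb{H}}^{2}$ and the feedback term $\mathbb{E}\|\int_{0}^{t_{n}}\bar{\mathcal{R}}_{N}(t_{n}-r)(\hat{F}_{N}(r)-\bar{F}_{N}(r))dr\|_{\mathbb{H}}^{2}$. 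The first term is driven by $t\mapsto f(u_{N}(t))$, whose H\"older regularity of order $\gamma<H-\rho\alpha/s$ follows from Theorem \ref{thmholderun} and the Lipschitz assumption \eqref{eqassptf}; repeating the contour argument above (now on $\mathcal{R}_{N}-\bar{\mathcal{R}}_{N}$) gives a contribution of the required order. In the second term the Lipschitz bound \eqref{eqassptf} yields $\hat{F}_{N}-\bar{F}_{N}=P_{N}(f(u_{N}(t_{j-1}))-f(u_{N}^{j-1}))$, controlled by $\sum_{j}\tau\,\mathbb{E}\|u_{N}(t_{j-1})-u_{N}^{j-1}\|_{\mathbb{H}}^{2}$, so a discrete Gr\"onwall inequality closes the estimate and produces the stated rate $\tau^{H-\frac{\rho\alpha}{s}-\epsilon}$.

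The main obstacle is precisely this core estimate of $\|\lambda_{k}^{\rho}{}_{0}\partial_{r}^{\frac{1-2H}{2}}(E_{k}-G_{k})\|_{L^{2}}$: the fractional smoothing factor $z^{\frac{1-2H}{2}}$ sits on top of the convolution-quadrature difference, so a standard first-order bound cannot be quoted directly and one must carefully track this extra $|z|$-power through both contour regions and, on $\Gamma_{\theta,\kappa}\setminus\Gamma_{\theta,\kappa}^{\tau}$, tame the exponential factor from Lemma \ref{LemEsF}, all while keeping the nonlinear feedback loop intact under Gr\"onwall.
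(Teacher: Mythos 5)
Your overall architecture coincides with the paper's: the same drift/noise splitting, the same key identity rewriting the discrete noise term as a true stochastic integral against the piecewise average $G_{k}(t_{n}-\cdot)$, the reduction via Lemma \ref{eqcorleml} and $\|A^{-\rho}Q^{1/2}\|_{\mathcal{L}_{2}}<\infty$ to $\sup_{k}\|\lambda_{k}^{\rho}{}_{0}\partial_{r}^{\frac{1-2H}{2}}(E_{k}-G_{k})\|_{L^{2}}$, the contour splitting into $\Gamma_{\theta,\kappa}^{\tau}$ and $\Gamma_{\theta,\kappa}\setminus\Gamma_{\theta,\kappa}^{\tau}$, and the Gr\"onwall closure of the nonlinear feedback (your $\hat F_{N}$ is the paper's $F_{N}(t_{i-1})$, and merging the paper's intermediate comparison through $\bar E_{k}$ into a single difference $E_{k}-G_{k}$ is harmless). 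However, there is a genuine gap in how you control the outer contour region, and it propagates into a missing piece of the argument. On $\Gamma_{\theta,\kappa}\setminus\Gamma_{\theta,\kappa}^{\tau}$ one has $|z|\geq \pi/(\tau\sin\theta)$, so $|z|\tau\geq \pi/\sin\theta$ and is unbounded above; your claim that the factor $e^{\beta\alpha|z|\tau}$ from Lemma \ref{LemEsF} can be absorbed ``using $|z|\tau\leq C$ in that region'' is backwards. In the paper the absorption comes instead from $|e^{zr}|=e^{r|z|\cos\theta}$ with $\cos\theta<0$, which dominates $e^{|z|\rho\alpha\tau/s}$ only when $r\geq t_{1}=\tau$ and $\theta$ is chosen with $|\cos\theta|>\rho\alpha/s$; this is precisely why the contour bounds in $\vartheta_{3,1}$ and $\vartheta_{3,2}$ are integrated only over $[t_{1},t_{n}]$.

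Consequently the subinterval $r\in[0,t_{1}]$ cannot be handled by your contour computation at all and requires a separate direct estimate. The paper devotes a substantial part of the proof to exactly this term ($\vartheta_{3,3}$): it needs pointwise bounds of the form $|\lambda_{k}^{\rho}(\bar E_{k}(r_{1})-\bar E_{k}(r_{2}))|\leq C\tau^{\gamma}(r_{1}+\tau)^{-\gamma-\rho\alpha/s}$ together with a case analysis $H=1/2$, $H\in(1/2,1)$, $H\in(0,1/2)$, the last case forcing an integration by parts inside the Riemann--Liouville derivative and a companion bound on $\partial_{r_{1}}(\bar E_{k}(r_{1})-\bar E_{k}(r_{1}+\tau))$. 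Without this piece the mechanism you describe does not deliver the rate $\tau^{H-\frac{\rho\alpha}{s}-\epsilon}$ near the origin, so the ``core estimate'' you correctly identify as the main obstacle is not actually closed by your plan as written.
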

\begin{proof}
	Using \eqref{eqsemiresol} and \eqref{eqfullschsolrep} and taking the expectation of $\|u_{N}(t_{n})-u_{N}^{n}\|^{2}_{\mathbb{H}}$ yield
	\begin{equation*}
		\begin{aligned}
			&\mathbb{E}\|u_{N}(t_{n})-u_{N}^{n}\|_{\mathbb{H}}^{2}\\
			=&\mathbb{E}\left \|\int_{0}^{t_{n}}\mathcal{R}_{N}(t_{n}-r)F_{N}(r)-\bar{\mathcal{R}}_{N}(t_{n}-r)\bar{F}_{N}(r)dr\right \|_{\mathbb{H}}^{2}\\
			&\!\!\!\!\!\!+\mathbb{E}\left \|\sum_{k=1}^{N}\left (\int_{0}^{t_{n}}\sqrt{\Lambda_{k}}E_{k}(t_{n}-r)\phi_{k}dW^{H}_{k}(r)\right .\right .\\
			&\qquad\qquad\left .\left .-\int_{0}^{t_{n}}\sqrt{\Lambda_{k}}\bar{E}_{k}(t_{n}-r)\phi_{k}\bar{\partial}_{\tau}W^{H}_{k}(r)dr\right )\right \|_{\mathbb{H}}^{2}\\
			\leq&\mathbb{E}\left \|\int_{0}^{t_{n}}\mathcal{R}_{N}(t_{n}-r)F_{N}(r)-\bar{\mathcal{R}}_{N}(t_{n}-r)\bar{F}_{N}(r)dr\right \|_{\mathbb{H}}^{2}\\
			&+\mathbb{E}\left \|\int_{0}^{t_{n}}\sum_{k=1}^{N}\sqrt{\Lambda_{k}}(E_{k}(t_{n}-r)-\bar{E}_{k}(t_{n}-r))\phi_{k}dW^{H}_{k}(r)\right \|_{\mathbb{H}}^{2}\\
			&+\mathbb{E}\left \|\int_{0}^{t_{n}}\sum_{k=1}^{N}\sqrt{\Lambda_{k}}\bar{E}_{k}(t_{n}-r)\phi_{k}\left(dW^{H}_{k}(r)-\bar{\partial}_{\tau}W^{H}_{k}(r)dr \right)\right \|_{\mathbb{H}}^{2}\\
			=&\vartheta_{1}+\vartheta_{2}+\vartheta_{3}.
		\end{aligned}
	\end{equation*}
	For $\vartheta_{1}$, one can split it as
	\begin{equation*}
		\begin{aligned}
			\vartheta_{1}\leq& \mathbb{E}\left \|\sum_{i=1}^{n}\int_{t_{i-1}}^{t_{i}}\mathcal{R}_{N}(t_{n}-r)(F_{N}(r)-F_{N}(t_{i-1}))dr\right \|_{\mathbb{H}}^{2}\\
			&+ \mathbb{E}\left \|\sum_{i=1}^{n}\int_{t_{i-1}}^{t_{i}}(\mathcal{R}_{N}(t_{n}-r)-\bar{\mathcal{R}}_{N}(t_{n}-r))F_{N}(t_{i-1})dr\right \|_{\mathbb{H}}^{2}\\
			&+ \mathbb{E}\left \|\sum_{i=1}^{n}\int_{t_{i-1}}^{t_{i}}\bar{\mathcal{R}}_{N}(t_{n}-r)(F_{N}(t_{i-1})-\bar{F}_{N}(t_{i}))dr\right \|_{\mathbb{H}}^{2}\\
			=&\vartheta_{1,1}+\vartheta_{1,2}+\vartheta_{1,3}.
		\end{aligned}
	\end{equation*}
	By the  assumption \eqref{eqassptf} and Theorem \ref{thmholderun}, there holds
	\begin{equation*}
		\vartheta_{1,1}\leq C\tau^{2H-\frac{2\rho\alpha}{s}-\epsilon}.
	\end{equation*}
	As for $\vartheta_{1,2}$, one can get
	\begin{equation*}
		\begin{aligned}
			\vartheta_{1,2}\leq& C\mathbb{E}\sum_{i=1}^{n}\int_{t_{i-1}}^{t_{i}}(t_{n}-r)^{1-\epsilon}\|\mathcal{R}_{N}(t_{n}-r)-\bar{\mathcal{R}}_{N}(t_{n}-r)\|^{2}\left \|F_{N}(t_{i})\right \|_{\mathbb{H}}^{2}dr\\
			\leq&C\int_{0}^{t_{n}}(t_{n}-r)^{1-\epsilon}\|\mathcal{R}_{N}(t_{n}-r)-\bar{\mathcal{R}}_{N}(t_{n}-r)\|^{2}dr\\
			\leq&C\int_{0}^{t_{n}}r^{1-\epsilon}\left (\int_{\Gamma_{\theta,\kappa}\backslash\Gamma_{\theta, \kappa}^{\tau}}|e^{zr}|\|\tilde{\mathcal{R}}_{N}(z)\||dz|\right )^{2}dr\\
			&+C\int_{0}^{t_{n}}r^{1-\epsilon}\left (\int_{\Gamma_{\theta, \kappa}^{\tau}}|e^{zr}|\|\tilde{\mathcal{R}}_{N}(z)-\tilde{\bar{\mathcal{R}}}_{N}(z)\||dz|\right )^{2}dr\\
			= &\vartheta_{1,2,1}+\vartheta_{1,2,2}.
		\end{aligned}
	\end{equation*}
	As for $\vartheta_{1,2,1}$, there holds
	\begin{equation*}
		\begin{aligned}
			\vartheta_{1,2,1}\leq& C\int_{0}^{t_{n}}r^{1-\epsilon}\left (\int_{\Gamma_{\theta,\kappa}\backslash\Gamma_{\theta, \kappa}^{\tau}}|e^{zr}||z|^{-1}|dz|\right )^{2}dr\\
			\leq& C\tau^{2-2\epsilon}\int_{0}^{t_{n}}r^{1-\epsilon}\left (\int_{\Gamma_{\theta,\kappa}\backslash\Gamma_{\theta, \kappa}^{\tau}}|e^{zr}||z|^{-\epsilon}|dz|\right )^{2}dr\\
			\leq& C\tau^{2-2\epsilon}.
		\end{aligned}
	\end{equation*}
	Similarly, one has
	\begin{equation*}
		\begin{aligned}
			\vartheta_{1,2,2}\leq& C\tau^{2}\int_{0}^{t_{n}}r^{1-\epsilon}\left (\int_{\Gamma_{\theta, \kappa}^{\tau}}|e^{zr}||dz|\right )^{2}dr\\
			\leq& C\tau^{2}\int_{0}^{t_{n}}r^{1-\epsilon}\int_{\Gamma_{\theta, \kappa}^{\tau}}|e^{2zr}||z|^{1-2\epsilon}|dz|\int_{\Gamma_{\theta, \kappa}^{\tau}}|z|^{-1+2\epsilon}|dz|dr\\
			\leq& C\tau^{2-2\epsilon}.
		\end{aligned}
	\end{equation*}
	As for $\vartheta_{1,3}$, one can get
	\begin{equation*}
		\vartheta_{1,3}\leq C\tau\sum_{i=1}^{n-1}\mathbb\|u_{N}(t_{i})-u_{N}^{i}\|_{\mathbb{H}}^{2}.
	\end{equation*}
	For $\vartheta_{2}$, we have
	\begin{equation*}
		\begin{aligned}
			\vartheta_{2}\leq& C\sum_{k=1}^{N}\int_{0}^{t_{n}}(\sqrt{\Lambda_{k}}{}_{0}\partial_{r}^{\frac{1-2H}{2}}(E_{k}(r)-\bar{E}_{k}(r)))^{2}dr\\
			\leq& C\sup_{1\leq k\leq N}\int_{0}^{t_{n}}\left (\lambda_{k}^{\rho}{}_{0}\partial_{r}^{\frac{1-2H}{2}}(E_{k}(r)-\bar{E}_{k}(r))\right )^{2}dr.
		\end{aligned}
	\end{equation*}
	Then we split the following formula into two parts
	\begin{equation*}
		\begin{aligned}
			&\int_{0}^{t_{n}}\left (\lambda_{k}^{\rho}{}_{0}\partial_{r}^{\frac{1-2H}{2}}(E_{k}(r)-\bar{E}_{k}(r))\right )^{2}dr\\
			\leq &\int_{0}^{t_{n}}\left |\int_{\Gamma_{\theta,\kappa}\backslash\Gamma_{\theta, \kappa}^{\tau}}e^{zr}\lambda_{k}^{\rho}z^{\frac{1-2H}{2}}\tilde{E}_{k}(z)dz\right |^{2}dr\\
			&+\int_{0}^{t_{n}}\left |\int_{\Gamma_{\theta, \kappa}^{\tau}}e^{zr}\lambda_{k}^{\rho}z^{\frac{1-2H}{2}}(\tilde{E}_{k}(z)-\tilde{\bar{E}}_{k}(z))dz\right |^{2}dr\\
			=& \vartheta_{2,1}+\vartheta_{2,2}.
		\end{aligned}
	\end{equation*}
	From \eqref{equresolvent}, it follows that
	\begin{equation*}
		\begin{aligned}
			\vartheta_{2,1}\leq& C\int_{0}^{t_{n}}\left (\int_{\Gamma_{\theta,\kappa}\backslash\Gamma_{\theta, \kappa}^{\tau}}|e^{zr}||z|^{\frac{1-2H}{2}+\frac{\rho\alpha}{s}-1}|dz|\right )^{2}dr\\
			\leq& C\tau^{2H-\frac{2\rho\alpha}{s}-\epsilon}\int_{0}^{t_{n}}\int_{\Gamma_{\theta,\kappa}\backslash\Gamma_{\theta, \kappa}^{\tau}}|e^{2zr}||z|^{-\epsilon}|dz|dr\\
			\leq& C\tau^{2H-\frac{2\rho\alpha}{s}}.
		\end{aligned}
	\end{equation*}
	Combining Lemma \ref{Lemseriesest} and \eqref{equresolvent} leads to
	\begin{equation*}
		\begin{aligned}
			\vartheta_{2,2}\leq &C\tau^{2}\int_{0}^{t_{n}}\left (\int_{\Gamma_{\theta, \kappa}^{\tau}}|e^{zr}||z|^{\frac{1-2H}{2}+\frac{\rho\alpha}{s}}dz\right )^{2}dr\\
			\leq &C\tau^{2}\int_{0}^{t_{n}}\int_{\Gamma_{\theta, \kappa}^{\tau}}|e^{2zr}||z|^{1-2H+\frac{2\rho\alpha}{s}}|dz| dr\int_{\Gamma_{\theta, \kappa}^{\tau}}|dz|\\
			\leq& C\tau^{2H-\frac{2\rho\alpha}{s}}.
		\end{aligned}
	\end{equation*}
	Lemma \ref{eqcorleml} gives
	\begin{equation*}
		\begin{aligned}
			&\vartheta_{3}\\\leq& C\mathbb{E}\left \|\int_{0}^{t_{n}}\sum_{k=1}^{N}\sqrt{\Lambda_{k}}\Bigg (\bar{E}_{k}(t_{n}-r)\right .\\
			&\qquad\qquad\left .-\frac{1}{\tau}\sum_{i=1}^{n}\chi_{(t_{i-1},t_{i}]}(r)\int_{t_{i-1}}^{t_{i}}\bar{E}_{k}(t_{n}-\xi)d\xi\Bigg )\phi_{k}dW^{H}_{k}(r)\right \|^{2}_{\mathbb{H}}\\
			\leq&C\sum_{k=1}^{N}\int_{0}^{t_{n}}\left|\sqrt{\Lambda_{k}}{}_{0}\partial^{\frac{1-2H}{2}}_{r}\left (\bar{E}_{k}(r)-G_{k}(r)\right )\right|^{2}dr\\
			\leq&C\sup_{1\leq k\leq N}\int_{0}^{t_{n}}\left|\lambda_{k}^{\rho}{}_{0}\partial^{\frac{1-2H}{2}}_{r}\left (\bar{E}_{k}(r)-G_{k}(r)\right )\right|^{2}dr.
		\end{aligned}
	\end{equation*}
	Divide $\vartheta_{3}$ into three parts
	\begin{equation*}
		\begin{aligned}
			&\int_{0}^{t_{n}}\left|\lambda_{k}^{\rho}{}_{0}\partial^{\frac{1-2H}{2}}_{r}\left (\bar{E}_{k}(r)-G_{k}(r)\right )\right|^{2}dr\\
			\leq&C\int_{t_{1}}^{t_{n}}\left|\int_{\Gamma_{\theta,\kappa}^{\tau}}e^{zr}\lambda_{k}^{\rho}z^{\frac{1-2H}{2}}\tilde{\bar{E}}_{k}(z)dz-\int_{\Gamma_{\theta,\kappa}}e^{zr}\lambda_{k}^{\rho}z^{\frac{1-2H}{2}}\tilde{G}_{k}(z)dz\right|^{2}dr\\
			&+\int_{0}^{t_{1}}\left|\lambda_{k}^{\rho}{}_{0}\partial^{\frac{1-2H}{2}}_{r}\left (\bar{E}_{k}(r)-G_{k}(r)\right )\right|^{2}dr\\
			\leq&C\int_{t_{1}}^{t_{n}}\left|\int_{\Gamma_{\theta,\kappa}^{\tau}}e^{zr}\lambda_{k}^{\rho}z^{\frac{1-2H}{2}}(\tilde{\bar{E}}_{k}(z)-\tilde{G}_{k}(z))dz\right |^{2}dr\\
			&+C\int_{t_{1}}^{t_{n}}\left|\int_{\Gamma_{\theta,\kappa}\backslash\Gamma_{\theta,\kappa}^{\tau}}e^{zr}\lambda_{k}^{\rho}z^{\frac{1-2H}{2}}\tilde{G}_{k}(z)dz\right|^{2}dr\\
			&+\int_{0}^{t_{1}}\left|\lambda_{k}^{\rho}{}_{0}\partial^{\frac{1-2H}{2}}_{r}\left (\bar{E}_{k}(r)-G_{k}(r)\right )\right|^{2}dr\\			
			=& \vartheta_{3,1}+\vartheta_{3,2}+\vartheta_{3,3}.
		\end{aligned}
	\end{equation*}
	Eq. \eqref{equresolvent} gives
	\begin{equation*}
		\begin{aligned}
			\vartheta_{3,1}\leq&C\tau^{2}\int_{t_{1}}^{t_{n}}\left|\int_{\Gamma_{\theta,\kappa}\backslash\Gamma_{\theta,\kappa}^{\tau}}e^{zr}z^{\frac{1-2H}{2}+\frac{\rho\alpha}{s}}dz\right|^{2}dr\\
			\leq &C\tau^{2}\int_{t_{1}}^{t_{n}}\int_{\Gamma_{\theta, \kappa}^{\tau}}|e^{2zr}||z|^{1-2H+\frac{2\rho\alpha}{s}}|dz| dr\int_{\Gamma_{\theta, \kappa}^{\tau}}|dz|\\
			\leq& C\tau^{2H-\frac{2\rho\alpha}{s}}.
		\end{aligned}
	\end{equation*}
	As for $\vartheta_{3,2}$, using Lemma \ref{LemEsF}, one has
	\begin{equation*}
		\begin{aligned}
			\vartheta_{3,2}\leq& C\int_{t_{1}}^{t_{n}}\left (\int_{\Gamma_{\theta,\kappa}\backslash\Gamma_{\theta, \kappa}^{\tau}}|e^{zr+|z|\frac{\rho\alpha\tau}{s}}||z|^{\frac{1-2H}{2}+\frac{\rho\alpha}{s}-1}|dz|\right )^{2}dr\\
			\leq& C\tau^{2H-\frac{2\rho\alpha}{s}-\epsilon}\int_{t_{1}}^{t_{n}}\int_{\Gamma_{\theta,\kappa}\backslash\Gamma_{\theta, \kappa}^{\tau}}|e^{2zr+2|z|\frac{\rho\alpha\tau}{s}}||z|^{-\epsilon}|dz|dr\\
			\leq& C\tau^{2H-\frac{2\rho\alpha}{s}}.
		\end{aligned}
	\end{equation*}
 	As for $\vartheta_{3,3}$, we consider $\lambda^{\rho}_{k}(\bar{E}_{k}(r_{1})-\bar{E}_{k}(r_{2}))$ with $|r_{1}-r_{2}|\leq \tau$ and $r_{1},r_{2}>0$ first. Simple calculations give
 	\begin{equation*}
 		\begin{aligned}
 			&|\lambda^{\rho}_{k}(\bar{E}_{k}(r_{1})-\bar{E}_{k}(r_{2}))|\\
 			\leq&C\left|\int_{\Gamma^{\tau}_{\theta,\kappa}}(e^{zr_{1}}-e^{zr_{2}})\lambda^{\rho}_{k} \tilde{\bar{E}}_{k}(z)dz\right |\\
 			\leq& C\tau^{\gamma}\int_{\Gamma^{\tau}_{\theta,\kappa}}|e^{z(r_{1}+\tau)}||z|^{\rho\alpha/s-1+\gamma}|dz|\\
 			\leq& C\tau^{\gamma}(r_{1}+\tau)^{-\gamma-\rho\alpha/s},
 		\end{aligned}
 	\end{equation*}
 where $\gamma\in[0,1-\rho\alpha/s)$.
 Thus when $H=1/2$, the mean value theorem gives
 \begin{equation*}
 	\begin{aligned}
 		\vartheta_{3,3}\leq C\tau^{1-2\rho\alpha/s}.
 	\end{aligned}
 \end{equation*}
 	 Similarly, when $H\in(1/2,1)$, we have
 	\begin{equation*}
 		\begin{aligned}
 			\vartheta_{3,3}\leq& \int_{0}^{t_{1}}\left(\lambda_{k}^{\rho}\int_{0}^{r}(r-r_{1})^{\frac{2H-1}{2}-1}\left |\bar{E}_{k}(r_{1})-\frac{1}{\tau}\int_{0}^{t_{1}}\bar{E}_{k}(\xi)d\xi\right |dr_{1}\right)^{2}dr\\
 			\leq& C\tau^{2H-2\rho\alpha/s}.
 		\end{aligned}
 	\end{equation*}
 As for $H\in(0,1/2)$, one has
 \begin{equation*}
 	\begin{aligned}
 		\vartheta_{3,3}\leq& \int_{0}^{t_{1}}\left(\lambda_{k}^{\rho}\partial_{t}\int_{0}^{r}(r-r_{1})^{\frac{2H-1}{2}}\left |\bar{E}_{k}(r_{1})-\frac{1}{\tau}\int_{0}^{t_{1}}\bar{E}_{k}(\xi)d\xi\right |dr_{1}\right)^{2}dr\\
 		\leq& \int_{0}^{t_{1}}\left|\lambda_{k}^{\rho}\int_{0}^{r}(r-r_{1})^{\frac{2H-1}{2}} \partial_{r_{1}}\left(\bar{E}_{k}(r_{1})-\frac{1}{\tau}\int_{t_{2}}^{t_{3}}\bar{E}_{k}(\xi)d\xi\right)dr_{1}\right|^{2}dr\\
 		&+\int_{0}^{t_{1}}\left|\lambda_{k}^{\rho}\partial_{t}\int_{0}^{r}(r-r_{1})^{\frac{2H-1}{2}}\left (\bar{E}_{k}(0)-\frac{1}{\tau}\int_{0}^{t_{1}}\bar{E}_{k}(\xi)d\xi\right )dr_{1}\right|^{2}dr\\
 		\leq& C\tau^{2H-2\rho\alpha/s},
 	\end{aligned}
 \end{equation*}
where we use that for $\gamma\in[0,1]$,
\begin{equation*}
	\begin{aligned}
		&|\lambda^{\rho}_{k}\partial_{r_{1}}(\bar{E}_{k}(r_{1})-\bar{E}_{k}(r_{1}+\tau))|\\
		\leq&C\left|\partial_{r_{1}}\int_{\Gamma^{\tau}_{\theta,\kappa}}e^{zr_{1}}(1-e^{z\tau})\lambda^{\rho}_{k} \tilde{\bar{E}}_{k}(z)dz\right |\\
		\leq& C\tau^{\gamma}\int_{\Gamma^{\tau}_{\theta,\kappa}}|e^{z(r_{1}+\tau)}||z|^{\rho\alpha/s+\gamma}|dz|\\
		\leq& C\tau^{\gamma}(r_{1}+\tau)^{-\gamma-\rho\alpha/s-1}.
	\end{aligned}
\end{equation*}
	Collecting the above estimates and using the discrete Gr\"onwall inequality, the desired result is reached.
\end{proof}

\section{Numerical experiments}
In this section, we provide some numerical examples to verify the theoretical results. 
Here we take $Q$'s eigenvalues $\Lambda_{k}=k^{m}$, $k=1,2,\cdots$. According to the assumption $\|A^{-\rho}Q^{1/2}\|_{\mathcal{L}_{2}}<\infty$ and Lemma \ref{thmeigenvalue}, we have that $\rho$ is approximately equal to $\frac{1+m}{4}d$.

In the numerical experiments, we consider the equation
\begin{equation}\label{equretosolnum}
	\left \{
	\begin{split}
		&\partial_{t} u+\!_0\partial^{1-\alpha}_tA^{s} u
		=\sin(u)+\dot{W}^{H}_{Q} \,~\qquad\quad {\rm in}\ D,\ t\in(0,T],\\
		&u(\cdot,0)=0 \,\qquad\qquad\qquad\qquad\qquad\qquad~~ {\rm in}\ D,\\
		&u=0 \qquad\qquad\qquad\qquad\qquad\qquad\qquad~ \ \, {\rm on}\ \partial D,\ t\in(0,T],
	\end{split}
	\right .
\end{equation}
where $D=(0,1)$ and $T=0.01$. We take $100$ trajectories to calculate the solution of Eq. $\eqref{equretosolnum}$. Since the exact solution of \eqref{equretosolnum} is unknown, the spatial errors and temporal errors can be measured by
\begin{equation*}
	\begin{aligned}
		&e_{N}=\left (\frac{1}{100}\sum_{i=1}^{100}\|u^{L}_{N}(\omega_{i})-u^{L}_{2N}(\omega_{i})\|^{2}_{\mathbb{H}}\right )^{1/2},\\
		&e_{\tau}=\left (\frac{1}{100}\sum_{i=1}^{100}\|u_{\tau}(\omega_{i})-u_{\tau/2}(\omega_{i})\|^{2}_{\mathbb{H}}\right )^{1/2},
	\end{aligned}
\end{equation*}
where $u^{L}_{N}(\omega_{i})$ ($u_{\tau}(\omega_{i})$) means the numerical solution of $u$ at time $t_L$ with $N$ spectral bases  (step size $\tau$) and sample $\omega_{j}$; we can respectively calculate the spatial and temporal convergence rates by
\begin{equation*}
	{\rm Rate}=\frac{\ln(e_{N}/e_{2N})}{\ln(2)},\quad {\rm Rate}=\frac{\ln(e_{\tau}/e_{\tau/2})}{\ln(2)}.
\end{equation*}

\begin{example}
	Here, we verify the temporal convergence rate of the presented scheme (\ref{eqfullscheme}). We choose $N=100$ and $m=0,\ -0.5,\ -1$ with different $\alpha$ and $s$. The corresponding results with $H=0.3,\ 0.5,\ 0.8$ are shown in Tables \ref{tab:timeH03}, \ref{tab:timeH05}, and \ref{tab:timeH08}, respectively, and the predicted convergence rates are presented in bracket of the last column. All the convergence rates agree with the estimates provided in Theorem \ref{thmtimeer}.
	\begin{table}[htbp]
		\caption{Temporal errors and convergence rates with $H=0.3$}
		\begin{tabular}{cccccccl}
			\hline\noalign{\smallskip}
			$m$& $(\alpha,s)\backslash T/\tau$ & 32 & 64 & 128 & 256 & Rate \\
			\noalign{\smallskip}\hline\noalign{\smallskip}
			0 & (0.3,0.7) & 4.392E-02 & 9.707E-03 & 8.370E-03 & 7.452E-03 & 0.186 (0.193) \\
			& (0.5,0.7) & 3.485E-02 & 3.121E-02 & 2.812E-02 & 2.544E-02 & 0.145 (0.121) \\
			\noalign{\smallskip}\hline\noalign{\smallskip}
			-0.5& (0.3,0.4) & 4.541E-02 & 7.035E-03 & 5.839E-03 & 4.501E-03 & 0.306 (0.206) \\
			& (0.3,0.5) & 4.743E-02 & 6.694E-03 & 5.364E-03 & 4.277E-03 & 0.312 (0.225) \\
			\noalign{\smallskip}\hline\noalign{\smallskip}
			-1 & (0.3,0.7) & 5.477E-02 & 4.150E-03 & 3.186E-03 & 2.439E-03 & 0.403 (0.3) \\
			& (0.5,0.7) & 5.477E-02 & 4.042E-03 & 3.045E-03 & 2.153E-03 & 0.472 (0.3) \\
			\noalign{\smallskip}\hline
		\end{tabular}
		\label{tab:timeH03}
	\end{table}
	
	\begin{table}[htbp]
		\caption{Temporal errors and convergence rates with $H=0.5$}
		\begin{tabular}{cccccccc}
			\hline\noalign{\smallskip}
			$m$& $(\alpha,s)\backslash T/\tau$  & 32 & 64 & 128 & 256 & Rate \\
			\noalign{\smallskip}\hline\noalign{\smallskip}
			0 & (0.5,0.7) & 5.669E-02 & 4.250E-03 & 3.378E-03 & 2.653E-03 & 0.343 (0.321) \\
			& (0.6,0.7) & 5.345E-02 & 6.729E-03 & 5.387E-03 & 4.109E-03 & 0.363 (0.286) \\
			\noalign{\smallskip}\hline\noalign{\smallskip}
			-0.5 & (0.3,0.5) & 6.519E-02 & 8.896E-04 & 6.239E-04 & 4.543E-04 & 0.482 (0.425) \\
			& (0.5,0.5) & 6.124E-02 & 1.436E-03 & 9.607E-04 & 6.371E-04 & 0.603 (0.375) \\
			\noalign{\smallskip}\hline\noalign{\smallskip}
			-1 & (0.3,0.4) & 7.071E-02 & 4.388E-04 & 2.913E-04 & 1.861E-04 & 0.609 (0.5) \\
			& (0.3,0.7) & 7.071E-02 & 5.488E-04 & 3.746E-04 & 2.441E-04 & 0.569 (0.5) \\
			\noalign{\smallskip}\hline
		\end{tabular}
		\label{tab:timeH05}
	\end{table}
	
	\begin{table}[htbp]
		\caption{Temporal errors and convergence rates with $H=0.8$}
		\begin{tabular}{ccccccc}
			\hline\noalign{\smallskip}
			$m$& $(\alpha,s)\backslash T/\tau$  & 32 & 64 & 128 & 256 &   Rate \\
			\noalign{\smallskip}\hline\noalign{\smallskip}
			0 & (0.5,0.7) & 7.883E-02 & 1.856E-04 & 1.189E-04 & 7.688E-05 & 0.646 (0.621)  \\
			& (0.6,0.7) & 7.653E-02 & 3.032E-04 & 1.929E-04 & 1.218E-04 & 0.665 (0.586)   \\
			\noalign{\smallskip}\hline\noalign{\smallskip}
			-0.5 	& (0.3,0.5) & 8.515E-02 & 4.132E-05 & 2.403E-05 & 1.330E-05 & 0.804 (0.725)   \\
			& (0.5,0.5) & 8.216E-02 & 6.741E-05 & 3.781E-05 & 2.072E-05 & 0.863 (0.675)   \\
			\noalign{\smallskip}\hline\noalign{\smallskip}
			-1 & (0.3,0.7) & 8.944E-02 & 2.742E-05 & 1.405E-05 & 8.038E-06 & 0.876 (0.8)   \\
			& (0.5,0.5) & 8.944E-02 & 2.480E-05 & 1.272E-05 & 6.399E-06 & 0.970 (0.8)   \\
			\noalign{\smallskip}\hline
		\end{tabular}
		\label{tab:timeH08}
	\end{table}

\end{example}

\begin{example}
	Spatial convergence rate of the scheme (\ref{eqfullscheme}) is validated in this example. Here we take $\tau=T/2048$ with  different $\alpha$ and $s$. We choose $m=0,\ -0.5,\ -1$ and the corresponding errors and convergence rates with $H=0.3,\ 0.5,\ 0.8$ are presented in Tables \ref{tab:spaceH03}, \ref{tab:spaceH05}, and \ref{tab:spaceH08}, respectively. All the convergence rates are consistent with the predicted ones  in Theorem \ref{thmspter} (which are presented in bracket of the last column).
	\begin{table}[htbp]
		\caption{Spatial errors and convergence rates with $H=0.3$}
		\begin{tabular}{ccccccc}
			\hline\noalign{\smallskip}
			$m$& $(\alpha,s)\backslash N$  & 8 & 16 & 32 & 64 & Rate \\
		\noalign{\smallskip}	\hline\noalign{\smallskip}
			0 & (0.3,0.7) & 9.487E-02 & 4.303E-02 & 2.849E-02 & 1.596E-02 & 0.743 (0.9) \\
			& (0.6,0.7) & 4.472E-02 & 1.587E-01 & 1.511E-01 & 1.273E-01 & 0.173 (0.2) \\
			\noalign{\smallskip}\hline\noalign{\smallskip}
			-0.5 & (0.3,0.4) & 7.416E-02 & 6.858E-02 & 4.717E-02 & 3.210E-02 & 0.555 (0.55) \\
			& (0.3,0.7) & 1.072E-01 & 1.984E-02 & 8.381E-03 & 3.588E-03 & 1.261 (1.15) \\
			\noalign{\smallskip}\hline\noalign{\smallskip}
			-1 & (0.3,0.4) & 8.944E-02 & 2.689E-02 & 1.341E-02 & 6.787E-03 & 1.014 (0.8) \\
			& (0.3,0.7) & 1.183E-01 & 7.179E-03 & 2.437E-03 & 8.080E-04 & 1.667 (1.4) \\
			\noalign{\smallskip}\hline
		\end{tabular}
		\label{tab:spaceH03}
	\end{table}
	
	\begin{table}[htbp]
		\caption{Spatial errors and convergence rates with $H=0.5$}
		\begin{tabular}{ccccccc}
			\hline\noalign{\smallskip}
			$m$& $(\alpha,s)\backslash N$  & 8 & 16 & 32 & 64 &Rate  \\
			\noalign{\smallskip}\hline\noalign{\smallskip}
			0 & (0.3,0.7) & 9.487E-02 & 1.308E-02 & 8.560E-03 & 4.801E-03 & 0.780 (0.900) \\
			& (0.6,0.7) & 8.165E-02 & 5.509E-02 & 3.973E-02 & 2.787E-02 & 0.518 (0.667) \\
			\noalign{\smallskip}\hline\noalign{\smallskip}
			-0.5 & (0.3,0.4) & 7.416E-02 & 2.416E-02 & 1.704E-02 & 1.143E-02 & 0.612 (0.550) \\
			& (0.6,0.4) & 6.455E-02 & 5.299E-02 & 4.490E-02 & 3.519E-02 & 0.351 (0.417) \\
			\noalign{\smallskip}\hline\noalign{\smallskip}
			-1 & (0.3,0.4) & 8.944E-02 & 9.550E-03 & 5.066E-03 & 2.397E-03 & 1.062 (0.800) \\
			& (0.6,0.4) & 8.165E-02 & 2.283E-02 & 1.371E-02 & 7.445E-03 & 0.857 (0.667) \\
			\noalign{\smallskip}\hline
		\end{tabular}
		\label{tab:spaceH05}
	\end{table}

	\begin{table}[htbp]
		\caption{Spatial errors and convergence rates with $H=0.8$}
		\begin{tabular}{ccccccc}
			\hline\noalign{\smallskip}
			$m$& $(\alpha,s)\backslash N$  & 8 & 16 & 32 & 64 & Rate \\
			\noalign{\smallskip}\hline\noalign{\smallskip}
			0 & (0.3,0.4) & 5.477E-02 & 1.493E-02 & 1.400E-02 & 1.261E-02 & 0.155 (0.3) \\
			& (0.3,0.7) & 9.487E-02 & 3.258E-03 & 2.028E-03 & 1.102E-03 & 0.803 (0.9) \\
			\noalign{\smallskip}\hline \noalign{\smallskip}
			-0.5 & (0.3,0.4) & 7.416E-02 & 5.713E-03 & 4.251E-03 & 2.609E-03 & 0.595 (0.55) \\
			& (0.6,0.7) & 1.072E-01 & 5.062E-03 & 2.350E-03 & 9.963E-04 & 1.207 (1.15) \\
			\noalign{\smallskip}\hline \noalign{\smallskip}
			-1 & (0.6,0.4) & 8.944E-02 & 5.312E-03 & 3.246E-03 & 1.730E-03 & 0.862 (0.8) \\
			& (0.6,0.7) & 1.183E-01 & 1.963E-03 & 7.343E-04 & 2.278E-04 & 1.646 (1.4) \\
			\noalign{\smallskip}\hline
		\end{tabular}
		\label{tab:spaceH08}
	\end{table}
	
\end{example}

\section{Conclusions}

We offer the unified numerical analysis for stochastic nonlinear fractional diffusion equation driven by fractional Gaussian noise with Hurst index $H\in (0,1)$. The regularity estimates of mild solution in time and space are developed based on a novel estimate of the second moment of stochastic integral of fBm. The fully discrete scheme constructed by spectral Galerkin method and backward Euler convolution quadrature is proposed and optimal error estimates are obtained. The theoretical results are also verified by numerical experiments.

%


%
%

\bibliographystyle{spmpsci}
\bibliography{refH05}

\begin{thebibliography}{10}
\providecommand{\url}[1]{{#1}}
\providecommand{\urlprefix}{URL }
\expandafter\ifx\csname urlstyle\endcsname\relax
  \providecommand{\doi}[1]{DOI~\discretionary{}{}{}#1}\else
  \providecommand{\doi}{DOI~\discretionary{}{}{}\begingroup
  \urlstyle{rm}\Url}\fi

\bibitem{Acosta.2019Feaffep}
Acosta, G., Bersetche, F.M., Borthagaray, J.P.: {Finite element approximations
  for fractional evolution problems}.
\newblock Fract. Calc. Appl. Anal. \textbf{22}, 767--794 (2019)

\bibitem{Acosta.2017AFLERoSaFEA}
Acosta, G., Borthagaray, J.P.: {A fractional Laplace equation: regularity of
  solutions and finite element approximations}.
\newblock SIAM J. Numer. Anal. \textbf{55}, 472--495 (2017)

\bibitem{Arezoomandan.2021ScmfspdewfBm}
Arezoomandan, M., Soheili, A.R.: {Spectral collocation method for stochastic
  partial differential equations with fractional Brownian motion}.
\newblock J. Comput. Appl. Math. \textbf{389}, 113369 (2021)

\bibitem{Banna.2019Fbm}
Banna, O.: {Fractional Brownian Motion: Approximations and Projections}.
\newblock {John Wiley and Sons Inc}, Hoboken NJ (2019)

\bibitem{Bardina.2006MfiwHplt12}
Bardina, X., Jolis, M.: {Multiple fractional integral with Hurst parameter less
  than 1/2}.
\newblock Stochastic Process. Appl. \textbf{116}, 463--479 (2006)

\bibitem{Cao.2017ASEEwAWaRN}
Cao, Y., Hong, J., Liu, Z.: {Approximating stochastic evolution equations with
  additive white and rough noises}.
\newblock SIAM J. Numer. Anal. \textbf{55}, 1958--1981 (2017)

\bibitem{Cao.2018FeafsosdedbfBm}
Cao, Y., Hong, J., Liu, Z.: {Finite element approximations for second-order
  stochastic differential equation driven by fractional Brownian motion}.
\newblock IMA J. Numer. Anal. \textbf{38}, 184--197 (2018)

\bibitem{DiNezza.2012HgttfSs}
{Di Nezza}, E., Palatucci, G., Valdinoci, E.: {Hitchhiker's guide to the
  fractional Sobolev spaces}.
\newblock Bull. Sci. Math. \textbf{136}, 521--573 (2012)

\bibitem{Ervin.2006Vfftsfade}
Ervin, V.J., Roop, J.P.: {Variational formulation for the stationary fractional
  advection dispersion equation}.
\newblock Numer. Methods Partial Differential Equations \textbf{22}, 558--576
  (2006)

\bibitem{Gunzburger.2018ScrotdfstfPstastwn}
Gunzburger, M., Li, B., Wang, J.: {Sharp convergence rates of time
  discretization for stochastic time-fractional PDEs subject to additive
  space-time white noise}.
\newblock Math. Comp. \textbf{88}, 1715--1741 (2018)

\bibitem{Jin.2020Iisos}
Jin, B., Zhou, Z.: {Incomplete iterative solution of subdiffusion}.
\newblock Numer. Math. \textbf{145}, 693--725 (2020)

\bibitem{Kloeden.1992Nsosde}
Kloeden, P.E., Platen, E.: {Numerical Solution of Stochastic Differential
  Equations}.
\newblock Springer-Verlag, Berlin and New York (1992)

\bibitem{Laptev.1997DaNEPoDiES}
Laptev, A.: {Dirichlet and Neumann eigenvalue problems on domains in Euclidean
  spaces}.
\newblock J. Funct. Anal. \textbf{151}, 531--545 (1997)

\bibitem{Li.1983OtSeatep}
Li, P., Yau, S.T.: {On the Schr\"odinger equation and the eigenvalue problem}.
\newblock Comm. Math. Phys. \textbf{88}, 309--318 (1983)

\bibitem{Li.2017GFEAfSSTFWE}
Li, Y., Wang, Y., Deng, W.: {Galerkin finite element approximations for
  stochastic space-time fractional wave equations}.
\newblock SIAM J. Numer. Anal. \textbf{55}, 3173--3202 (2017)

\bibitem{Liu.2021HOAfSSFWEFbaASTGN}
Liu, X., Deng, W.: {Higher order approximation for stochastic space fractional
  wave equation forced by an additive space-time Gaussian noise}.
\newblock J. Sci. Comput. \textbf{87}, 11 (2021)

\bibitem{Lubich.1988CqadocIb}
Lubich, C.: {Convolution quadrature and discretized operational calculus. I}.
\newblock Numer. Math. \textbf{52}, 129--145 (1988)

\bibitem{Lubich.1988CqadocI}
Lubich, C.: {Convolution quadrature and discretized operational calculus. II}.
\newblock Numer. Math. \textbf{52}, 413--425 (1988)

\bibitem{Lubich.1996Ndeefaoaeewaptmt}
Lubich, C., Sloan, I.H., Thom{\'e}e, V.: {Nonsmooth data error estimates for
  approximations of an evolution equation with a positive-type memory term}.
\newblock Math. Comp. \textbf{65}, 1--18 (1996)

\bibitem{Mandelbrot.1968FBMFNaA}
Mandelbrot, B.B., {van Ness}, J.W.: {Fractional Brownian motions, fractional
  noises and applications}.
\newblock SIAM Rev. \textbf{10}, 422--437 (1968)

\bibitem{Mishura.2008ScffBmarp}
Mishura, I.S.: {Stochastic Calculus for Fractional Brownian Motion and Related
  Processes}.
\newblock Springer, Berlin (2008)

\bibitem{Podlubny.1999Fde}
Podlubny, I.: {Fractional Differential Equations}.
\newblock Academic, San Diego and London (1999)

\bibitem{Simonsen.2003Macitnesmbw}
Simonsen, I.: {Measuring anti-correlations in the nordic electricity spot
  market by wavelets}.
\newblock Phys. A \textbf{322}, 597--606 (2003)

\bibitem{Song.2003Vondra}
Song, R., Vondra\u{c}ek, Z.: {Potential theory of subordinate killed Brownian
  motion in a domain}.
\newblock Probab. Theory Relat. Fields. \textbf{125}, 578--592 (2003)

\bibitem{Wang.2017SmsrrfSwfnaocrftna}
Wang, X., Qi, R., Jiang, F.: {Sharp mean-square regularity results for SPDEs
  with fractional noise and optimal convergence rates for the numerical
  approximations}.
\newblock BIT \textbf{57}, 557--585 (2017)

\bibitem{Wu.2020AaotLsfsspdbistwn}
Wu, X., Yan, Y., Yan, Y.: {An analysis of the L1 scheme for stochastic
  subdiffusion problem driven by integrated space-time white noise}.
\newblock Appl. Numer. Math. \textbf{157}, 69--87 (2020)

\bibitem{Yan.2019OeeffspdewfBm}
Yan, L., Yin, X.: {Optimal error estimates for fractional stochastic partial
  differential equation with fractional Brownian motion}.
\newblock Discrete Contin. Dyn. Syst. Ser. B \textbf{24}, 615--635 (2019)

\end{thebibliography}

\end{document}